\documentclass[11pt]{amsart}
\usepackage{graphicx} 
\usepackage{mathtools,amssymb,amsthm}

\title[Global MT for Indicator Weights of Bounded Complexity in $\mathbb{R}^2$]{Global Mizohata-Takeuchi for Indicator Weights of Bounded Complexity in $\mathbb{R}^2$}
\author{Jiaqi Hao, Ruyi Wang and Ruixiang Zhang}
\date{\today}

\newcommand{\R}{\mathbb{R}}

\newcommand{\mi}{2\pi\mathrm{i}}

\theoremstyle{definition}

\newtheorem{theorem}{Theorem}[section]
\newtheorem{definition}[theorem]{Definition}
\newtheorem{lemma}[theorem]{Lemma}
\newtheorem{conjecture}[theorem]{Conjecture}

\newtheorem{proposition}[theorem]{Proposition}
\newtheorem{remark}[theorem]{Remark}
\usepackage[
    backend=biber,
    style=alphabetic
]{biblatex}
\begin{document}

\maketitle

\begin{abstract}
    We prove the global Mizohata-Takeuchi for bounded complexity weights in $\mathbb{R}^2$, with uniform implied constant in terms of the complexity and an arbitrary $C^2$ uniformly convex curve on the Fourier side. A $\log$-loss version follows from a short argument by the $T^*T$ method and the main estimate of \cite{BGZZ2026}. The main effort of this paper is devoted to the removal of the logarithmic factor. To achieve this, we use tools in semialgebraic geometry and adopt and develop ideas in \cite{RicciStein1987}. As a by-product, we obtained a uniform geometric inequality for planar regions of bounded complexity that may be of independent interest.
\end{abstract}

\section{Introduction}
\label{intro}
\subsection{Main result}
Let $\Sigma\subset\mathbb R^n$ be a compact $C^2$ hypersurface with  hypersurface measure $d\sigma$. Define its Fourier extension operator
\[
    Ef(x) 
    :=\int_\Sigma e^{\mi x\cdot\xi}f(\xi)\,d\sigma(\xi),
    \qquad x\in\mathbb R^n.
\]

A natural question in the $L^2$ theory is to prove weighted $L^2$ inequalities to understand where the mass of $E f$ can concentrate. This question makes sense even if $\Sigma$ is a hyperplane. In this case, a correct and most natural weighted $L^2$ estimate is

\begin{equation}
    \label{MTeq}
    \int_{\mathbb R^n}|E f(x)|^2w(x)\,dx
    \lesssim_\Sigma
    \|Xw\|_{L^\infty}\|f\|_{L^2(\Sigma,d\sigma)}^2,
\end{equation}
where $w\geq 0$ is a continuous weight on $\mathbb R^n$ and the \emph{X-ray transform} for a line $\ell$ is defined as
\[
    Xw(\ell):=\int_\ell w\,d\mathcal H^1,
    \qquad
    \|Xw\|_{L^\infty}:=\sup_{\ell\subset\mathbb R^n\;\mathrm{line}}Xw(\ell).
\]

The (global, or endpoint) \emph{Mizohata-Takeuchi conjecture} is an attempt to directly generalize this to other hypersurfaces.

\begin{conjecture}[Mizohata-Takeuchi Conjecture]\label{MTConj}
    \eqref{MTeq} is true for every compact, uniformly convex $C^2$ hypersurface. 
\end{conjecture}

If true, Conjecture \ref{MTConj} will be useful in many applications in areas like analysis, PDE and number theory. Surprisingly, Cairo \cite{Cairo2025} disproved Conjecture \ref{MTConj} for any connected hypersurface patch, unless it is a subset of a hyperplane. Nevertheless, it is of value to find useful classes of weights where this conjecture holds. In the present paper, we prove this conjecture in $\mathbb{R}^2$ for weights being characteristic functions of \emph{semialgebraic sets of bounded complexity}.

We introduce one notational simplification before stating our main theorem. For an open set $U\subset\mathbb R^2$, we define its maximal line capacity
\[
    L_U:=\sup_{\ell\subset\mathbb R^2\;\mathrm{line}}|\ell\cap U|=\|X\chi_U\|_{L^\infty}.
\]

\begin{theorem}[Main Theorem]\label{endpointMTthm}
    Let $\gamma\in C^2([-1,1])$ satisfy
$1\leq\gamma''(\xi)\leq2, \xi\in[-1,1]$. Its graph $\Sigma=\{(\xi,\gamma(\xi)):\xi\in[-1,1]\}$  is a $C^2$ uniformly convex curve. Define the extension operator
\[
    E_\Sigma f(x,y):=\int_{-1}^1 e^{\mi(x\xi+y\gamma(\xi))}f(\xi)\,d\xi.
\]

For a fixed constant $k$, let $U\subset\mathbb R^2$ be an open
    semialgebraic set of complexity at most $k$. Then, for every
    $f\in L^2([-1,1])$,
\begin{equation}\label{EndpointMTeq}
    \int_U|E_\Sigma f(x,y)|^2\,dx\,dy
        \lesssim_k L_U\|f\|_{L^2([-1,1])}^2.
\end{equation}
\end{theorem}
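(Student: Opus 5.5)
By $T^*T$, the estimate \eqref{EndpointMTeq} is equivalent to the $L^2([-1,1])$ boundedness, with norm $\lesssim_k L_U$, of the operator
\[
    Tg(\xi)=\int_{-1}^1 K(\xi,\eta)\,g(\eta)\,d\eta,
    \qquad
    K(\xi,\eta)=\widehat{\chi_U}\bigl(\eta-\xi,\gamma(\eta)-\gamma(\xi)\bigr).
\]
The plan is to analyze this kernel geometrically and prove a Schur-type or almost-orthogonality bound for $T$.

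Write $v=(\eta-\xi,\gamma(\eta)-\gamma(\xi))$. Its length is $\sim|\xi-\eta|$, and by convexity its direction $\theta(\xi,\eta)$ has $\partial_\eta\theta\sim1$. The quantity $\widehat{\chi_U}(v)$ is an oscillatory integral over $U$. Slicing $U$ by lines perpendicular to $v$ gives
\[
    \widehat{\chi_U}(v)=\int_{\mathbb R}\lambda_\theta(t)\,e^{-\mi t|v|}\,dt,
\]
where $\lambda_\theta(t)$ is the length of the slice of $U$ at height $t$ in direction $\theta$. Thus $\lambda_\theta\le L_U$ pointwise, and by bounded complexity $\lambda_\theta$ is piecewise semialgebraic with $O_k(1)$ pieces. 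The log-loss version presumably comes from the crude bound $|K|\lesssim L_U/|\xi-\eta|$ obtained by integrating by parts once. Combined with the main estimate of \cite{BGZZ2026}, that bound gives $\|T\|\lesssim L_U\log$. To remove the log I would treat $T$ as a Hilbert-transform-like singular integral and show that its kernel has the cancellation needed for a Calder\'on--Zygmund/Cotlar argument, in the spirit of \cite{RicciStein1987}.

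Concretely, integrating by parts once,
\[
    \widehat{\chi_U}(v)=\frac{1}{\mi|v|}\int e^{-\mi t|v|}\,d\lambda_\theta(t).
\]
The measure $d\lambda_\theta$ splits into an atomic part, at the $O_k(1)$ jump points $t_j(\theta)$ where $\lambda_\theta$ jumps (sizes $\le 2L_U$), and an absolutely continuous part with $O_k(1)$ monotone pieces. I would decompose $K=\sum_j K_j$ plus an error. Each main term has the form
\[
    K_j(\xi,\eta)=\frac{a_j(\theta)}{|v|}\,e^{-\mi t_j(\theta)|v|},
\]
with $|a_j|\lesssim L_U$, and $a_j$, $t_j$ semialgebraic in $\theta$ of bounded complexity on $O_k(1)$ intervals of $\theta$. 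After restricting to the corresponding semialgebraic regions in $(\xi,\eta)$, each $K_j$ is an oscillatory singular integral kernel. Its phase is
\[
    \phi(\xi,\eta)=t_j(\theta(\xi,\eta))\,|v(\xi,\eta)|,
\]
and it is a bounded-complexity (Nash-type) function. Ricci--Stein handle kernels $e^{iP(x,y)}/(x-y)$ with polynomial $P$ uniformly in the coefficients. I would prove the analogue for phases that are semialgebraic of bounded complexity. The steps are: (i) decompose dyadically in $|\xi-\eta|\sim2^{-m}$; (ii) by semialgebraic cell decomposition, partition into $O_k(1)$ cells where $\phi$ and its derivatives are monotone and comparable to monomial-like sizes; (iii) run the Ricci--Stein scheme. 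In that scheme, scales where the phase is small at that scale are handled as the truncated Hilbert transform (uniformly bounded, using that the amplitude $a_j(\theta)$ has bounded variation $\lesssim L_U$). Scales where the phase oscillates are handled by $TT^*$ with van der Corput, which gives geometric decay $2^{-\epsilon|m-m_0|}$ away from the critical scale.

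The main obstacle is step (iii) for non-polynomial semialgebraic phases. There we need a uniform van der Corput/sublevel-set estimate for the mixed derivative $\partial_\xi\partial_\eta\phi$ restricted to cells, with constants depending only on $k$. I would first try to get this from the bounded number of zeros of derivatives of Nash functions of bounded complexity, via a Tarski--Seidenberg based count. This should make each cell have $O_k(1)$ monotonicity intervals, so the one-dimensional van der Corput lemma applies uniformly. A second, geometric difficulty is controlling the non-atomic error part and the total variation of $a_j(\theta)$ by $L_U$ rather than by the diameter of $U$. This is where I expect to need the uniform planar geometric inequality for bounded-complexity regions mentioned in the abstract, namely that the total variation in $\theta$ of boundary slice data is $\lesssim_k L_U$.
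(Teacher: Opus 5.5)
Your framework ($T^*T$, slicing $U$ by lines perpendicular to $v$, one integration by parts, then a Ricci--Stein scheme) matches the paper's, but two steps fail as you describe them.

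\textbf{The split of $d\lambda_\theta$.} Your split puts the main term in the wrong place. For open semialgebraic $U$, $\lambda_\theta$ can jump only when a slicing line contains a segment of $\partial U$, and this happens for only $O_k(1)$ directions. Since $\partial_\eta\theta\sim1$, your kernels $K_j$ are supported on a null set of $(\xi,\eta)$. So for almost every $(\xi,\eta)$ the whole kernel comes from the absolutely continuous part you call the error.

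That part carries the full singular-integral behaviour, and no size or variation bound controls it. For the square, when the slicing lines are nearly parallel to a side, $\lambda_\theta$ has a ramp of width $\lesssim 1/|v|$ that acts like a jump. The kernel then looks like $R\,e^{\pm\mi Rs}/s$, and absolute values lose $\log R$.

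The missing step is the layer-cake reparametrization:
\begin{itemize}
\item write $K(x,y)=\int e^{\mi c(x-y)}H(\varphi(x,y),c)\,dc$ and insert $H=\int_0^{L_U}1_{\{\lambda<H\}}\,d\lambda$;
\item apply Minkowski in $\lambda$, which is exactly where the factor $L_U$ comes from;
\item for fixed $\lambda$, cell-decompose $\{H>\lambda\}$ in the $(m,c)$-plane.
\end{itemize}
The kernel then becomes $O_k(1)$ unit-amplitude terms $1_{\{\varphi(x,y)\in I\}}e^{\mi(x-y)g(\varphi(x,y))}/(x-y)$, with $g$ a one-variable semialgebraic function of bounded complexity. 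This handles jumps and continuous parts at once.

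The variation bounds you worry about are trivial from piecewise monotonicity and $0\le\lambda_\theta\le L_U$. The planar inequality in the abstract is $\mathrm{Area}(U)\lesssim_k L_U^2$, which the paper uses only for the Schur bound on $|x-y|<1/L_U$.

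\textbf{The oscillatory step.} This step is not available as you state it. The phase $t_j(\theta(\xi,\eta))\,|v(\xi,\eta)|$ is not semialgebraic in $(\xi,\eta)$, because $\gamma$ is an arbitrary $C^2$ function. So Tarski--Seidenberg zero counts for its derivatives cannot be invoked. Worse, a uniform Ricci--Stein bound for $e^{\mi\Phi(x,y)}/(x-y)$ over all bounded-complexity semialgebraic $\Phi$ is false. The paper's example $\Phi_\epsilon(x,y)=(x-y)/\bigl(2\pi\sqrt{(x-y)^2+\epsilon^2}\bigr)$ has bounded complexity and gives norm $\gtrsim\log(1/\epsilon)$.

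What works is the specific structure $P(x,y)=(x-y)g(\varphi(x,y))$.
\begin{itemize}
\item Because $\partial_x\partial_y(\gamma(x)-\gamma(y))=0$, one has $P_{xy}=(x-y)g''(\varphi)\varphi_x\varphi_y$.
\item Hence $P=G(x)-G(y)+O(\sup|g''|\,|x-y|^3)$ with $G'=g\circ\gamma'$. Near the diagonal the phase is not small, but it is a removable modulation.
\item In the $TT^*$ step with $x-y,\,z-y\in(\delta,2\delta)$, the phase $y\mapsto P(x,y)-P(z,y)$ has derivative $\gtrsim\kappa\delta|x-z|$ when $|g''|\sim\kappa$.
\end{itemize}
On the curve side, these facts use only $1\le\gamma''\le2$.

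Semialgebraicity then enters only through $g$, and counting zeros is not enough. You need $O_k(1)$ intervals $J$ with $d_J(t)\,|g'''(t)/g''(t)|\le C_k$; the paper gets these from Bloch's theorem applied to an algebraic equation for $g''$. This makes $|g''|$ comparable on dyadic pieces near degenerate endpoints. Those pieces can then be rescaled to the uniform case, because the class $1\le\gamma''\le2$ is invariant under that rescaling.
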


We remark that if we assume $\mathrm{supp}\,w \subset B_R$ (a ball of radius $R>1$) and allow a $\log R$-loss in \eqref{EndpointMTeq}, then there is a much shorter proof for \eqref{endpointMTthm}, see Subsection \ref{pfoutlinesubsec}. Most of the effort in this paper is devoted to proving the endpoint inequality without $\log$-loss.

\subsection{Related historical background} Conjecture \ref{MTConj} originated in the study of
dispersive equations and their Cauchy problems \cite{Takeuchi1974, Takeuchi1980, Mizohata1985}; see also \cite{BarceloRuizVega1997} for its connection with weighted estimates for the Helmholtz equation. For the sphere, this conjecture known for radial weights \cite{BarceloRuizVega1997, CarberySoriaVargas2007}.

In the talk \cite{guth-barrier-22}, Guth surprisingly pointed out that  axiomatic wave packet decomposition alone cannot prove Conjecture \ref{MTConj}. The outline in \cite{guth-barrier-22} was then fleshed out and further developed in \cite{CarberyIliopoulouWang2024}, where the authors proved: if $\Sigma$ is a compact, uniformly convex $C^2$ hypersurface, then
\begin{equation}\label{powerlossineq}
    \int_{B_R}|E_\Sigma f(x)|^2w(x)\,dx
    \lesssim_{\Sigma,\varepsilon}
    R^{\frac{n-1}{n+1}+\varepsilon}
    \|Xw\|_{L^\infty}\|f\|_{L^2(\Sigma,d\sigma)}^2
\end{equation}
and the exponent $\frac{n-1}{n+1}$ cannot be improved by the usual axiomatic wave packet decomposition. In the setting of the hyperbolic paraboloid in $\mathbb R^3$, Kim-Wang \cite{KimWang2026} obtained a bound with power loss $R^{\frac12+\varepsilon}$ for Conjecture \ref{MTConj}.

Cairo \cite{Cairo2025} then constructed weights that fail Conjecture \ref{MTConj} for all non-hyperplanar $\Sigma$ in all dimensions. Her counterexamples have $\log$-blowups. Later Cairo-Zhang \cite{CairoZhang2025} showed that there are examples of strictly convex $C^2$ hypersurfaces where \eqref{powerlossineq} fails with the exponent $\frac{n-1}{n+1}$ replaced by anything smaller. See \cite{Fenves2026, bennett2026rectangles, fenves2026sharp} for related work on counterexamples to Conjecture \ref{MTConj}. See references in the papers mentioned in this section for more related history of Conjecture \ref{MTConj}.

\subsection{Proof outline}\label{pfoutlinesubsec}


We outline the ideas in the proof of Theorem \ref{endpointMTthm}. By considering intersection with a large ball, we may assume $U$ is bounded without loss of generality. We use the $TT^*$ method. Let $E_Uf=(E_\Sigma f)|_U$. Define $T:=E_U^*E_U.$ Then
\[
    Tf(x)=\int_{-1}^1 K(x,y)f(y)\,dy,
\]
where
\begin{equation}\label{formulaofK}
    K(x,y):=\int_U e^{\mi\left(\xi(x-y)+\eta(\gamma(x)-\gamma(y))\right)}\,d\xi\,d\eta, \qquad x,y \in [-1,1].
\end{equation}

It suffices to prove
\begin{equation}\label{TTstarineq}
    \|T\|_{L^2([-1,1])\to L^2([-1,1])}\lesssim_k L_U.
\end{equation}

If we assume $U \subset B_R (R>1)$ and are allowed to prove \eqref{TTstarineq} with a $\log R$-loss, there is a quite simple proof. By the stationary set method (Theorem 1.1 in \cite{BGZZ2026}) and the trivial bound $|U|\lesssim R L_U$, we obtain \[|K(x, y)|\lesssim \min \{RL_U, \frac{L_U}{|x-y|}\}\] and the Schur test gives
\begin{equation}\label{loglossversion}
    \|T\|_{L^2([-1,1])\to L^2([-1,1])}\lesssim_k L_U\cdot \log R.
\end{equation} The above non-endpoint proof even works without the strictly convex condition $\gamma '' > 1$.

\begin{remark}
    We remark that the above application of the $TT^*$ method is different from some classical applications of that method. In some other applications to Fourier restriction problems, we could have considered another useful operator $E_U E_U^*$ and would have a convolution operator with the kernel being the more familiar $(d\sigma)^{\hat{}}$. But its oscillation  is very difficult to deal with in this problem. Instead, in the current consideration of $E_U^* E_U$, the kernel $K$ may seem less elementary at first, but Theorem 1.1 in \cite{BGZZ2026} guarantees that it has much better decay property. It is crucial to take advantage of this property in both the non-endpoint and the endpoint proofs in the present paper. This method was partly inspired by the recent work \cite{guth2026new}, which also fits into the broader picture of  weighted restriction. 
\end{remark}

To obtain a uniform endpoint estimate \eqref{TTstarineq} is much more difficult. We need to find more delicate cancellations in the operator $T$ and take advantage of them. For example, it is natural to expect $K$ sometimes (but not always) behave like the Hilbert transform. Uniformity is also a difficulty: In the estimate we obtained, the constant only depends on the complexity of $U$. Much of the effort in this paper will be devoted to prove the endpoint estimate.

We outline some ideas in the proof of the endpoint estimate. Continue with the idea in \cite{BGZZ2026}, we can express $K$ as a one-dimensional integral and see if integration-by-parts can help. When $|x-y|$ is small, we do not hope integration-by-parts to be a wise move. Quantifying this observation, we were suggested to prove the following curious estimate

\begin{equation}\label{AreaboundedbyL2}
    \mathrm{Area}(U) \lesssim_k L_U^2
\end{equation}
that may be of independent interest.

A familiar sharp example for \eqref{AreaboundedbyL2} is a square. Perhaps it is more surprising that \eqref{AreaboundedbyL2} also saturated by the example $U$ being an annulus (with arbitrary inner and outer radii). We were able to prove \eqref{AreaboundedbyL2} and use it to bound the ``non-oscillatory'' contribution of $K$ (i.e. when the integrand in \eqref{formulaofK} is not expected to oscillate much). Because of this special property of the annuli, in our treatment of the oscillatory contribution, much of our proof intuition was learned from explicit calculations related to the annulus.

The proof of \eqref{AreaboundedbyL2} is done in Subsection \ref{neardiagsubsec}. Intuitively, in the proof we see that a bounded complexity open set on the plane either looks like a rectangle or looks like the annulus. When its boundary is flat, it looks like a rectangle and we have a good intuition for the largest X-ray transform. When its boundary is highly curved, it looks like the annulus. A calculation shows that if the region does not hold long segments, its boundary must be seriously curved. Since the boundary of $U$ has bounded complexity, the total variation of the tangent direction is well controlled. This ends up being sufficient to prove \eqref{AreaboundedbyL2} in the very-curved-boundary case.

For the oscillatory contribution (handled in Section \ref{oscillationsec}), it is convenient to use a layer-cake type decomposition to further decompose $K$ and reduce to suboperators. This is performed in Subsection \ref{offdiagsubsec}. These suboperators have kernels look like the Hilbert transform, but with a modulation jointly in $x$ and $y$ on the numerator. We handle this contribution use a method closely related to that of Ricci-Stein \cite{RicciStein1987}.   Notice that if the modulation were in the variable-separated form of $F(x)-G(y)$, the problem would become boundedness of Hilbert transform after modulation and would therefore be solved.  We end up using $F(x)-G(y)$ to approximate the modulation when $x$ and $y$ are close, and use $TT^*$ again to control contributions from faraway $x$ and $y$.

There are many more technical details in estimating the oscillatory contribution, often caused by our requirement that the estimate needs to be $\log$-free and uniform in terms of the complexity $k$. We need to do Whitney-type decompositions and further technical approximations. There is a uniform result in tame geometry that we used, recorded in Section \ref{complex}: For a semialgebraic function $f$ of complexity $O(1)$ in one variable $x$ on an interval $I$ and a fixed derivative order $s$, we can partition $I$ into $O(1)$ disjoint intervals, such that inside each interval $J$, 

\begin{equation}\label{derivatebounduniform}
    |\frac{f^{(s+1)}(x)}{f^{(s)}(x)}|\lesssim \frac{1}{\text{dist} (x, \partial J)}.
\end{equation}
This estimate is proved with the help of complex analysis and may be of independent interest.

\subsection{Organization of the paper}

The paper is organized as follows. Section \ref{intro} presents our main theorem (Theorem \ref{endpointMTthm}) and reviews its background. Section \ref{complexity} collects the results from semialgebraic geometry used in the paper. Section \ref{example} presents key examples that motivate our proof along with some counterexamples for more general estimates and cautions. Section \ref{reduction} proves the geometric estimate \eqref{AreaboundedbyL2} and use it to establish the non-oscillatory estimate Theorem \ref{area} and reduces the oscillatory case to the singular integral estimate stated in Theorem \ref{main}.  Section \ref{oscillationsec} proves Theorem \ref{two endpoints}, thereby reducing the proof of Theorem \ref{main} to Theorem \ref{curvature-partition} concerning \eqref{derivatebounduniform}, which is proved in the  Section \ref{complex}.

\subsection{Communication with AI}
The starting point of this work was before AI communications. We were interested in finding new classes of weights for which Conjecture \ref{MTConj} holds, and observed the main theorem in  \cite{BGZZ2026} with the $TT^*$ method give the $\log$-loss version of \eqref{loglossversion}. We then tried to remove the $\log$ factor. This turned out to be a difficult problem. So we took a step back to work on the special case where the curve is the unit parabola and the weight is the characteristic function of the square, the disc or the annulus (with arbitrary inner and outer radii). The global versions  without logarithmic losses in these cases were all successfully proved, with two very different methods. The square has corners and the  $TT^*$ operator is related to the Hilbert transform, which is known to be bounded on $L^2$. In the disc or annulus case, integration by parts was used to control the kernel. We also wondered about the general case. Even the non-oscillatory part seemed to suggest the nice inequality \eqref{AreaboundedbyL2}, which we surprisingly found difficult and absent from the existing literature. 

Claude Fable 5 was used to give the proof of \eqref{AreaboundedbyL2} and Theorem \ref{area}, which handles the non-oscillatory part. For the oscillatory part, inspired by the case study of the annulus, we 
were naturally led to think Lemma \ref{annulus} is true,  but we were not able to prove it. We then tried other arguments that worked for an annulus and a few other special cases, but could not make them work in general. Claude Fable 5 examined these arguments and suggested using ideas from \cite{RicciStein1987} to approach the problem through Lemma \ref{annulus}, but could not complete the proof either.

A while later, we used ChatGPT 6 Astra to continue this approach. It gave a proof of Theorem \ref{main} for the extension operator associated with the parabola, using the third-order approximation explained in Subsection \ref{thirdordersubsec} and  partition Theorems \ref{algebraic-log-derivative} and \ref{curvature-partition} in Section \ref{complex}. Some ideas we had earlier, inspired by the case studies, turned out to appear in the proof of the oscillatory part, such as integration by parts, approximating the kernel by Hilbert transform with modulation and relating the line intersection length in nearby different directions and positions. We worked through the proof and extended it to extension operators associated with curves \(\gamma\) satisfying \(1\leq\gamma''\leq2\). In a separate exchange after giving the proof, ChatGPT 6 Astra also provided references, \cite{Pan1991} and \cite{Pan1995} where Lemma \ref{annulus} was proved. We also used ChatGPT to improve the language and presentation of the paper. 

We have checked all proofs, including those obtained with AI assistance, and verified the references. We take full responsibility for the correctness of the results and the content of the paper.

\subsection{Acknowledgments}
JH is deeply grateful to Zhongkai Tao and Zixuan Wang for their continued support and many insightful and helpful discussions. RW thanks Shixin Ren and Tianchi Xiang for their technical support and stimulating discussions on the use of AI. She also thanks Alex Cohen and Hong Wang for their helpful comments on the manuscript.
RZ is supported by NSF CAREER DMS–2143989 and Sloan Research Fellowship.

\bigskip
\section{Background in semialgebraic geometry}
\label{complexity}
In this section, we recall several fundamental tame geometric properties of semialgebraic sets and functions in $\mathbb{R}^2$.

\begin{definition}[{Semialgebraic Sets and Complexity in $\mathbb{R}^2$, \cite[Definition 2.1]{BGZZ2026}}]
A \emph{semialgebraic set} $U \subset \mathbb{R}^2$ is a finite union of subsets, each of which is defined by a formula of the form 
\[ P_1(x, y) = 0, \dots  , \, P_n(x,y)=0, \quad Q_1(x, y) > 0, \dots, \, Q_l(x, y) > 0 \]
where $P_1, \dots, P_n,  Q_1, \dots, Q_l \in \mathbb{R}[x, y]$. 

If the total number of polynomials occurring in the formulas defining $U$ is bounded by $s$, and the maximum degree of these polynomials is bounded by $D$, we say that the \emph{complexity} of $U$ is bounded by $k = sD$.
\end{definition}

\begin{definition}[{Semialgebraic Functions, \cite[Definition 2.2]{BGZZ2026}}]
A semialgebraic function $f: I \to \mathbb{R}$ (where $I \subset \mathbb{R}$ is an interval) is a function whose graph 
\[ \text{graph}(f) = \{(x, f(x)) \in \mathbb{R}^2 : x \in I\} \]
is a semialgebraic set in $\mathbb{R}^2$. We say that the complexity of $f$ is bounded by $k$ if the complexity of the semialgebraic set $\text{graph}(f)$ is bounded by $k$.
\end{definition}

\begin{definition}
Let \(I\subset\R\) be an interval. A function \(f:I\to\R\) is called an algebraic function if there exists a nonzero polynomial \(P\in\R[x,y]\) such that
\[
P(\xi,f(\xi))=0,\; \forall \xi\in I.
\]
\end{definition}

\begin{theorem}[{\cite[Propositions 2.83 and 7.31]{BasuPollackRoy2006}}]
\label{component}
Let \(I\subset\R\) be an interval and let \(f:I\to\R\) be a semialgebraic function of complexity at most \(k\). Then, for every \(t\in\R\), the set
\[
\{x\in I:f(x)>t\}
\]
is semialgebraic, and the number of its connected components is bounded by \(O(k^2)\).
\end{theorem}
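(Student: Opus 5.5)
The set $\{x\in I: f(x)>t\}$ is the projection to the first coordinate of
\[
\mathrm{graph}(f)\cap\{(x,y)\in\R^2 : y-t>0\}.
\]
I would prove the statement in two steps: first show this set is semialgebraic, then count its connected components in terms of $k$.

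\emph{Step 1: semialgebraicity.} The graph is defined by at most $s$ polynomials of degree at most $D$. Adding the single polynomial $y-t$ of degree one gives a semialgebraic set $S\subset\R^2$ described by at most $s+1$ polynomials of degree at most $\max(D,1)$. By the Tarski--Seidenberg principle, the image of $S$ under $(x,y)\mapsto x$ is semialgebraic; this is Proposition 2.83 of \cite{BasuPollackRoy2006}. Since $f$ is a function, the projection is a bijection from $S$ onto $\{x\in I:f(x)>t\}$.

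\emph{Step 2: counting components.} Because the projection is continuous, the number of connected components of the image is at most the number of connected components of $S$. So it suffices to bound the components of $S$. The bound on the number of connected components (Betti numbers) of a semialgebraic set in $\R^2$ defined by $s'$ polynomials of degree at most $D$ is of the form $O(s'D)^{2}$ (Proposition 7.31 of \cite{BasuPollackRoy2006}). With $s'=s+1$ this gives $O((s+1)D)^2=O(k^2)$, since $sD=k$, $s,D\ge 1$ and $k\geq 1$.

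There are two alternative routes. One could instead bound the components of the image directly by a projection bound, but the route above is simpler. One could also use the stronger fact that a one-dimensional semialgebraic set is a finite union of points and intervals, and bound the endpoints by counting roots. Along the latter route, the endpoints of the components lie among the $x$-coordinates of the boundary points of $S$, and these satisfy polynomial systems whose root count is bounded by B\'ezout.

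\emph{Main obstacle.} The only delicate point is bookkeeping: one must make sure that adjoining the polynomial $y-t$, which is linear, does not spoil the complexity bound, and that the constant in Proposition 7.31 is uniform in $t$. Uniformity in $t$ holds because $t$ enters only as a coefficient, and Proposition 7.31 depends only on the number and degrees of the polynomials.
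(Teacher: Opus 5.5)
Your argument is correct and is essentially the paper's. The paper gives no independent proof; it simply cites \cite[Propositions 2.83 and 7.31]{BasuPollackRoy2006}, i.e.\ semialgebraicity of $\{x\in I: f(x)>t\}$ together with an $O(s'D)^2$ bound on connected components of a set in $\R^2$ defined by $s'$ polynomials of degree at most $D$, which is exactly what you use. Your bookkeeping correctly fills in the details of that citation: adjoining the linear polynomial $y-t$, noting uniformity in $t$, and pushing the component count through the continuous projection (for $b_0$ the bound comes from counting components of sign-condition realizations, so closedness of $S$ is not needed).
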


\begin{theorem}[{\cite[(2.1.4)~and ~(2.4.1)]{FernandoGamboaRuiz2014}}]
\label{algebraic}
Let \(f:I\to\R\) be a semialgebraic function, where \(I\subset\R\) is an interval. Then there exists a finite partition
\[
I=I_1\cup\cdots\cup I_N\cup F,
\]
where each \(I_j\) is an interval and \(F\) is a finite set, such that \(f|_{I_j}\) is real analytic and algebraic for every \(j\). Moreover, \(N\) and the degrees of the corresponding polynomial equations can be bounded in terms of the complexity of \(f\).
\end{theorem}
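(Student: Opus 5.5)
The plan is to argue through the graph of $f$. Since $f$ is semialgebraic, $\Gamma=\mathrm{graph}(f)\subset\R^2$ is semialgebraic. The first step is a cylindrical algebraic decomposition of $\R^2$ adapted to the finitely many polynomials $P_1,\dots,P_n,Q_1,\dots,Q_l$ appearing in a defining formula for $\Gamma$. This gives finitely many points $a_1<\dots<a_m$ in $I$, together with the endpoints of $I$. Over each open interval $(a_i,a_{i+1})$, the real roots in $y$ of each nonzero polynomial $P_j(x,\cdot)$, $Q_j(x,\cdot)$ are given by finitely many continuous, pairwise non-crossing root functions of constant multiplicity. The number $m$ is bounded in terms of the degrees and the number of polynomials, by the standard CAD bounds in \cite{BasuPollackRoy2006}.

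The second step is to identify $f$ on each cell. On each interval $(a_i,a_{i+1})$, sign conditions are constant on the cells of the decomposition. Since $\Gamma$ meets each vertical line in exactly one point, the graph of $f$ over $(a_i,a_{i+1})$ must be a union of sections. Some care is needed here. The intersection of $\Gamma$ with the cylinder over $(a_i,a_{i+1})$ is a union of cells, either sections (graphs of root functions) or sectors (open bands between them). A sector contains a vertical segment, so it cannot lie in a graph. Hence $\Gamma$ over $(a_i,a_{i+1})$ is a union of section cells. Uniqueness of the $y$-value over each $x$ means exactly one section is used. So $f$ coincides there with a single root function $\xi_{r}(x)$ of some nonzero polynomial $R(x,y)$ taken from the finite list. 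If all $P_j$ vanish identically in $y$, $\Gamma$ would be a union of sectors; as just noted, that is impossible, so some defining polynomial is nonzero on the section. Therefore $R(x,f(x))=0$ on the interval, which gives algebraicity, and $\deg R\le D$.

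The third step, and the main obstacle, is analyticity. A root function of constant multiplicity $\mu$ of $R(x,\cdot)$ is a simple root of $\partial_y^{\mu-1}R(x,\cdot)$. To arrange this, refine the decomposition using the discriminants and subresultants of $R$ and its $y$-derivatives; this adds only boundedly many further points. Then the implicit function theorem applied to $\partial_y^{\mu-1}R$, which has nonvanishing $y$-derivative along the root, shows $f$ is real analytic on each refined open interval. It is also still algebraic via this same polynomial, of degree $\le D$. Finally, let $F$ consist of all the points $a_i$ together with the refinement points; the remaining open intervals are the $I_j$. The bounds on $N$ and on the degrees follow from the degree bounds on discriminants and resultants, which are polynomial in $D$ and $s$. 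This makes them a function of the complexity $k=sD$.
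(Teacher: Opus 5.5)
Your argument is correct, but it does not follow the paper. The paper gives no argument of its own and simply cites \cite{FernandoGamboaRuiz2014}. There the qualitative content is the standard Nash-function fact that a semialgebraic function is Nash (analytic and satisfying a nontrivial polynomial equation) off a lower-dimensional semialgebraic set, which in one variable is a finite set.

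You instead build a cylindrical algebraic decomposition adapted to the defining polynomials of the graph. Since sectors contain vertical segments, the graph over each base interval is exactly one section, hence a continuous root $\xi$ of some $R(x,\cdot)\not\equiv 0$ from the defining list. You then get analyticity from the implicit function theorem applied to $\partial_y^{\mu-1}R$, with $\mu$ the constant multiplicity.

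What your route buys is the ``moreover'' clause. The number of intervals and the degrees are controlled explicitly by the CAD and resultant degree bounds in $s$ and $D$, hence in $k=sD$. The qualitative Nash-function statements do not by themselves make this uniformity visible. The citation route, in turn, is shorter and works in all dimensions.

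Two small remarks. First, your sentence about all $P_j$ vanishing identically in $y$ is unnecessary. By construction every section of a CAD is the graph of a root of a family polynomial that is not identically zero over the base cell (possibly one of the $Q_j$), and that is all you need. Second, you can avoid tracking multiplicities by putting irreducible (or square-free) factors into the family. Then $\mathrm{Res}_y(R,\partial_y R)\not\equiv 0$, and $f$ is a simple root off its finitely many zeros. This is exactly the mechanism the paper uses later in Lemma~\ref{algebraic-log-derivative}.
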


Next we will give the theorems of cell decomposition, which ensure that any semialgebraic set of bounded complexity can be partitioned into finitely many topologically simple pieces, called \emph{cells}. 

\begin{theorem}[{Cell Decomposition in $\R$, \cite[Corollary 2.79]{BasuPollackRoy2006}}]
   $E\subset \R$ is semialgebraic if and only if it can be decomposed into a finite union of points and intervals. 
\end{theorem}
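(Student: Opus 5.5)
The statement is the cell decomposition in $\R$: a set $E\subset\R$ is semialgebraic if and only if it is a finite union of points and intervals. We prove the two implications separately. In both directions the point is that a one-variable polynomial is either identically zero or has finitely many real roots, so its sign is constant on each interval between consecutive roots.

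\emph{Sufficiency.} We show that every point and every interval is semialgebraic; since the class of semialgebraic sets is closed under finite unions by definition, this suffices. A point $\{a\}$ is cut out by the equation $x-a=0$. An open interval $(a,b)$ is given by $x-a>0,\ b-x>0$, and open half-lines and $\R$ itself are handled the same way (for $\R$, use the condition $1>0$). A closed or half-open interval is the union of an open interval with at most two endpoints, hence also semialgebraic.

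\emph{Necessity.} Suppose $E$ is semialgebraic. Then $E$ is a finite union of basic sets
\[
S=\{x: P_1(x)=\dots=P_n(x)=0,\ Q_1(x)>0,\dots,Q_l(x)>0\},
\]
with each $P_i,Q_j\in\R[x]$. It suffices to show that each such $S$ is a finite union of points and intervals.

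1. Consider the equations. If some $P_i$ is not identically zero, then the zero set $Z=\bigcap_i\{P_i=0\}$ is finite. If every $P_i$ vanishes identically, then $Z=\R$.

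2. Consider the inequalities. Discard any $Q_j$ that is identically zero: the condition $Q_j>0$ then fails everywhere, so $S=\emptyset$. Otherwise the nonzero $Q_j$ have finitely many real roots in total, say $r_1<\dots<r_m$.

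3. On each of the open intervals $(-\infty,r_1),(r_1,r_2),\dots,(r_m,\infty)$, no $Q_j$ vanishes. By the intermediate value theorem, each $Q_j$ has constant sign there. Hence the set $\{Q_1>0,\dots,Q_l>0\}$ is a union of some of these open intervals, possibly together with some of the points $r_i$. (In fact no $r_i$ can belong to it, since some $Q_j$ vanishes at $r_i$; this does not affect the argument.)

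4. Intersect with $Z$. If $Z$ is finite, then $S$ is a finite set of points. If $Z=\R$, then $S$ is the finite union of intervals and points found in step 3.

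In either case each basic set $S$ is a finite union of points and intervals, and therefore so is $E$.

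The proof has no substantive obstacle. The only step needing care is the degenerate case: polynomials that vanish identically must be separated out before invoking finiteness of roots, which is why step 1 and step 2 treat them first.
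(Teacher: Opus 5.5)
Your argument is correct. The paper gives no proof of its own and only cites Basu--Pollack--Roy, where the result rests on the same fact you use: a nonzero univariate polynomial has finitely many roots and constant sign between consecutive ones. If ``decomposed'' is read as requiring disjoint pieces, collect at once all real roots of all nonzero polynomials occurring in all the basic sets; membership in $E$ is then constant on each resulting point and open interval, so $E$ is a disjoint union of some of them.
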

\begin{definition}[Cells in $\mathbb{R}^2$]
A subset $C \subset \mathbb{R}^2$ is a \(C^r\) \emph{cell} if it takes one of the following forms:
\begin{enumerate}
    \item (0,0)-cell: $C = \{(x_0, y_0)\}$.
    \item (0,1)-cell: $C = \{x_0\} \times I$, where $I \subset \mathbb{R}$ is an open interval.
    \item (1,0)-cell: $C = \Gamma(f)$, the graph of $f: I \to \mathbb{R}$, a \(C^r\) semi-algebraic function defined on an interval.

    \item (1,1)-cell: $C = \{(x, y) \in I \times \mathbb{R} : f(x) < y < g(x)\}$, where $f, g: I \to \mathbb{R}\cup \{\pm\infty\}$ are \(C^r\) semi-algebraic functions satisfying $f(x) < g(x)$ for all $x \in I$.
\end{enumerate}
\end{definition}

\begin{theorem}[{Quantitative $C^r$ Cell Decomposition in $\mathbb{R}^2$, \cite[Theorem 5.34, Remark 5.35, Algorithm 11.15 and Exercise 11.8]{BasuPollackRoy2006}}]
\label{cell}
(1) Let $U \subset \mathbb{R}^2$ be a semi-algebraic set of complexity at most $k$. Then there exists a partition of $\mathbb{R}^2$ into at most $N_1(k,r)$ \(C^r\) cells such that $U$ is a union of some of these cells. Furthermore, the complexity of each cell is uniformly bounded by a constant depending only on $k$.

(2) Let $f: \mathbb{R} \to \mathbb{R}$ be a semi-algebraic function of complexity at most $k$. Then there exists a partition of $\mathbb{R}$ into at most $N_2(k, r)$ intervals and points such that for each open interval $I$ in the decomposition, the restriction $f|_I \in C^r(I)$.

\end{theorem}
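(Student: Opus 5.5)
The statement is Theorem \ref{cell}, a quantitative $C^r$ cell decomposition. Since it is essentially a repackaging of results in \cite{BasuPollackRoy2006}, the plan is to assemble it from the cylindrical algebraic decomposition (CAD) algorithm and then track complexity.

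\emph{Part (1).} Let $\mathcal P=\{P_1,\dots,P_s\}$ be the polynomials defining $U$, so $sD\le k$. Apply the CAD construction (Algorithm 11.15) to $\mathcal P$. In the projection step we form the family $\mathrm{Proj}(\mathcal P)\subset\R[x]$. It consists of the leading coefficients, discriminants $\mathrm{Disc}_y(P_i)$, resultants $\mathrm{Res}_y(P_i,P_j)$, and subresultant coefficients. There are $O(s^2D)$ such polynomials, each of degree $O(D^2)$.

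The real roots of these one-variable polynomials number $O(s^2D^3)$, which is bounded in terms of $k$. They cut $\R$ into points and open intervals $I$. Over each $I$, the polynomials of $\mathcal P$ that do not vanish identically are delineable (Theorem 5.34). This means their real roots in $y$ are given by finitely many continuous functions $\xi_1<\dots<\xi_m$ on $I$, with $m\le sD$. These functions are semialgebraic, and their graphs are cut out by $P_i(x,y)=0$ together with sign conditions on derivatives and on the elements of $\mathrm{Proj}(\mathcal P)$.

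The cells are then of four kinds: the points of the one-variable decomposition, the vertical pieces above those points, the graphs of the $\xi_j$, and the bands between consecutive graphs. On each cell every $P_i$ has constant sign, so $U$ is a union of cells. Counting gives at most $O(s^2D^3)\cdot O(sD)$ cells. The complexity of each cell is bounded by the number and degree of the polynomials in its defining formula, and both depend only on $s$ and $D$.

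\emph{Smoothness.} The main obstacle is upgrading continuity to $C^r$. Over $I$, the root $\xi_j$ is a simple root of a suitable square-free factor (or of a derivative $\partial_y^\ell P_i$, following the Thom encoding). So by the implicit function theorem $\xi_j$ is real analytic wherever that factor's $y$-derivative is nonzero. Delineability (Remark 5.35) guarantees this on all of $I$, after adding the relevant discriminants to $\mathrm{Proj}(\mathcal P)$, which are already included. Hence each $\xi_j$ is $C^\infty$ on $I$, and in particular $C^r$. This smoothness bound does not depend on $r$, and gives $N_1(k,r)=N_1(k)$.

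\emph{Part (2).} Let $f$ have complexity at most $k$. Take the CAD adapted to the polynomials defining $\mathrm{graph}(f)$. Over each open interval $I$ of the base decomposition, the graph is a union of graphs of analytic root functions $\xi_j$ by the argument above. Since it is the graph of a function, on each $I$ it coincides with a single $\xi_j$. Thus $f|_I$ is analytic, and in particular $C^r$. The number of intervals is bounded by the root count of $\mathrm{Proj}$, which depends only on $k$; this is the bound referred to in Exercise 11.8.
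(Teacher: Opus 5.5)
Your proposal is correct. It is essentially the argument behind the results the paper cites from \cite{BasuPollackRoy2006}, since the paper gives no proof of its own: a cylindrical decomposition adapted to the defining polynomials, smoothness of each root function from constant multiplicity $\mu$ plus the implicit function theorem applied to $\partial_y^{\mu-1}P_i$, and the cell count and complexity bounds from the degree bounds on the projection polynomials.

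One point should be made explicit. Describing the cells by sign conditions on the $\partial_y^\ell P_i$ that stay constant along each graph and band requires running the decomposition on the family closed under $\partial_y$, or else invoking effective quantifier elimination; either choice changes only the constants.
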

\begin{theorem}[A consequence of {\cite[Corollary 5.7]{BasuPollackRoy2006}}]
\label{slice}
Let \(S \subset \mathbb{R}^n \times \mathbb{R}\) be a bounded
semialgebraic set. For each \(x \in \mathbb{R}^n\), define the fiber
\[
S_x := \{t \in \mathbb{R} : (x,t) \in S\}.
\]
Then the function
\[
h(x) := |S_x|
\]
is semialgebraic, where \(|\cdot|\) denotes the one-dimensional Lebesgue
measure.
\end{theorem}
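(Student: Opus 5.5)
The plan is to deduce Theorem \ref{slice} from quantifier elimination (Tarski--Seidenberg), in the form recorded in \cite[Corollary 5.7]{BasuPollackRoy2006}, together with the one-dimensional cell decomposition. The argument has three steps: describe each fiber \(S_x\) by finitely many semialgebraic endpoint functions, express \(|S_x|\) as a signed sum of those endpoints, and then show that the graph of that sum is first-order definable.

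\textbf{Step 1 (endpoint functions).} Apply the cell decomposition of Theorem \ref{cell} in \(\mathbb{R}^n\times\mathbb{R}\), adapted to \(S\). This partitions \(\mathbb{R}^n\) into finitely many semialgebraic cells \(C_1,\dots,C_M\). Over each \(C_i\) there are finitely many continuous semialgebraic functions
\[
\zeta_{i,1}<\cdots<\zeta_{i,m_i}\colon C_i\to\mathbb{R}
\]
such that, for every \(x\in C_i\), the fiber \(S_x\) is a union of some of the points \(\zeta_{i,j}(x)\) and some of the open intervals \((\zeta_{i,j}(x),\zeta_{i,j+1}(x))\). The choice of which intervals occur depends only on \(i\), not on \(x\in C_i\). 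Because \(S\) is bounded, the unbounded intervals \((-\infty,\zeta_{i,1})\) and \((\zeta_{i,m_i},\infty)\) never occur in a nonempty fiber, so all relevant endpoints are finite.

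\textbf{Step 2 (fiber length).} On each \(C_i\) we then have
\[
h(x)=\sum_{j\in J_i}\bigl(\zeta_{i,j+1}(x)-\zeta_{i,j}(x)\bigr),
\]
where \(J_i\) is the fixed set of indices of intervals lying in \(S\). The isolated points contribute nothing to the measure.

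\textbf{Step 3 (semialgebraicity).} Finite sums and differences of semialgebraic functions are semialgebraic. Indeed, the graph of \(\zeta+\eta\) is
\[
\{(x,t):\exists u,v\ \ (x,u)\in\Gamma(\zeta),\ (x,v)\in\Gamma(\eta),\ t=u+v\},
\]
which is the projection of a semialgebraic set and hence semialgebraic by Tarski--Seidenberg. So \(h|_{C_i}\) is semialgebraic for each \(i\), and \(\Gamma(h)=\bigcup_i\Gamma(h|_{C_i})\) is a finite union of semialgebraic sets. This gives the theorem.

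The main obstacle is only bookkeeping in Step 1. One must use a cell decomposition in which the fiberwise structure over each base cell is uniform; this is exactly what a cylindrical decomposition gives. It is also where boundedness is needed, since it keeps \(h\) finite and rules out infinite endpoints. If one also wants complexity bounds for \(h\) in terms of those of \(S\), one would track them through the quantitative version of the decomposition in Theorem \ref{cell}. The theorem as stated does not require this.
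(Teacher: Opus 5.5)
Your argument is correct and is the standard cylindrical-decomposition derivation that the paper's bare citation of \cite[Corollary 5.7]{BasuPollackRoy2006} stands for. One small point: the paper's Theorem \ref{cell} is stated only in $\mathbb{R}^2$. So for general $n$, including the paper's actual use with $S\subset\mathbb{R}^3$, Step 1 must invoke the general cylindrical decomposition of $\mathbb{R}^{n+1}$. That decomposition supplies the uniform fiber structure over each base cell that you correctly identify as the key requirement.
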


\bigskip
\section{Some examples and cautions}
\label{example}
In this section, we will give a brief discussion of some important examples, such as the square and annulus. Through calculating these examples we can get some motivation for the general proof. To make the calculation more clear, we only consider the parabola case, where \(\gamma(x)=x^2\).
\subsection{Square}
We begin with the square
\[
U=[-R,R]^2\subset\mathbb R^2,\quad R\ge1.
\]
Since \(U\) is a product, separation of variables gives
\[
K(x,y)
=\frac{1}{\pi^2}
\frac{\sin(2\pi R(x-y))}{x-y}
\frac{\sin(2\pi R(x^2-y^2))}{x^2-y^2}.
\]

For sufficiently large \(R\), taking \(x=0\) and
\(R^{-1}\le |y|\le(2\pi R)^{-1/2}\), we obtain
\[
|K(0,y)|
\sim R\frac{|\sin(2\pi Ry)|}{|y|}.
\]
Consequently,
\[
\int_{-1}^{1}|K(0,y)|dy
\gtrsim
R\int_{1/R}^{(2\pi R)^{-1/2}}
\frac{|\sin(2\pi Ry)|}{y}dy
\gtrsim R\log R.
\]
Thus a direct application of Schur's test causes a logarithmic loss. To obtain the desired bound of order \(R\), we need to retain the oscillatory cancellation.

On the region \(|x^2-y^2|>R^{-1}\), we have
\[
|K(x,y)|
\lesssim \frac{1}{|x-y|^2\,|x+y|}.
\]

A standard splitting of the integration region gives
\[
\sup_{x\in[-1,1]}
\int_{\Omega_x}\frac{dy}{|x-y|^2\,|x+y|}
\lesssim R
\]
where \(\Omega_x:=\{y\in[-1,1]:|x^2-y^2|>R^{-1}\}\). By Schur's test, this part of the operator has \(L^2\) norm \(O(R)\).

On the remaining region, the kernel resembles
\[
R\frac{\sin(2\pi R(x-y))}{x-y}
=\frac{R}{2\mathrm{i}}
\left(
\frac{e^{\mi R(x-y)}}{x-y}
-\frac{e^{-\mi R(x-y)}}{x-y}
\right).
\]
The two exponential terms correspond to modulated Hilbert transforms, and modulation preserves the \(L^2\) norm. The restriction \(|x^2-y^2|\le R^{-1}\) introduces an additional truncation, but it can still be handled by some standard techniques.

This example shows how we can use the modulated Hilbert transform. We first separate the integral into parts near and away from the diagonal \(y=x\). Near \(y=x\), we need cancellation between the two sides of \(x\), as in the Hilbert transform. Away from \(y=x\), we can bound that part without pairing these points to use their cancellation. We will use the same idea in the later proofs.
\subsection{Generalizations of the square case}
After a semialgebraic cell decomposition, we first consider cells of the form
\[
U=\{(\xi,\eta):a\le\xi\le b,\ 0\le\eta\le g(\xi)\}
\]
where \(g\ge0\) is increasing and \(C^1\). Let \(M:=\sup_{[a,b]}g\).

Define
\[
h_{x,y}(\xi):=\int_0^{g(\xi)}e^{\mi\eta(x^2-y^2)}\,d\eta.
\]
Then
\[
K(x,y)=\int_a^b e^{\mi\xi(x-y)}h_{x,y}(\xi)\,d\xi.
\]
On \(|x-y|\le M^{-1}\), the bound \(|K(x,y)|\le M(b-a)\) and Schur's test give an operator norm of \(O(b-a)\).

On the remaining region, integration by parts gives
\[
K(x,y)\mathbf1_{\{|x-y|>M^{-1}\}}
=K_a(x,y)+K_b(x,y)+\widetilde K(x,y),
\]
where
\[
\begin{aligned}
K_a(x,y)
&=-\frac{e^{\mi a(x-y)}h_{x,y}(a)}{\mi(x-y)}
\mathbf1_{\{|x-y|>M^{-1}\}},\\
K_b(x,y)
&=\frac{e^{\mi b(x-y)}h_{x,y}(b)}{\mi(x-y)}
\mathbf1_{\{|x-y|>M^{-1}\}},\\
\widetilde K(x,y)
&=-\frac{\mathbf1_{\{|x-y|>M^{-1}\}}}{\mi(x-y)}
\int_a^b
e^{\mi[\xi(x-y)+g(\xi)(x^2-y^2)]}g'(\xi)\,d\xi.
\end{aligned}
\]
Let \(T_a,T_b\), and \(\widetilde T\) be the corresponding operators. Inserting the definition of \(h_{x,y}\), each boundary term is an integral of modulated truncated Hilbert transforms. Hence
\[
\|T_a\|_{L^2\to L^2}+\|T_b\|_{L^2\to L^2}
\lesssim g(a)+g(b)\lesssim M.
\]

For the remaining term, set
\[
F_\xi(y):=e^{-\mi(\xi y+y^2g(\xi))}f(y).
\]
For each fixed \(\xi\), the inner operator becomes
\[
\int_{|x-y|>M^{-1}}
\frac{e^{\mi[\xi(x-y)+g(\xi)(x^2-y^2)]}}{\mi(x-y)}
f(y)\,dy
=
\frac{e^{\mi(\xi x+x^2g(\xi))}}{\mi}
\int_{|x-y|>M^{-1}}\frac{F_\xi(y)}{x-y}\,dy.
\]
 Minkowski's inequality gives
\[
\|\widetilde T f\|_{L^2}
\lesssim
\int_a^b |g'(\xi)|\,\|F_\xi\|_2\,d\xi
=(g(b)-g(a))\|f\|_{L^2}
\le M\|f\|_{L^2}.
\]
Therefore,
\[
\|T\|_{L^2\to L^2}
\lesssim (b-a)+M
\le L_U.
\]

This leads us to consider domains that cannot be reduced to the above case. An important example is the annulus, which we discuss next.

\subsection{Annulus}
Recall that 
\[
K(x,y)
=\int_U e^{\mi \big(\xi(x-y)+\eta(x^2-y^2)\big)}d\xi d\eta.
\]
For any \(x,y \in [-1,1]\), denote \(m:=x+y \in [-2,2]\), then we introduce the orthonormal basis
\[
e_1(m):=\frac{(1,m)}{\sqrt{1+m^2}},
\;\;
e_2(m):=\frac{(-m,1)}{\sqrt{1+m^2}}
\]
and the orthogonal change of variables $(s,t)=R_m(\xi,\eta)$ defined by
\[
(\xi,\eta)=s\,e_1(m)+t\,e_2(m).
\]
Define the slice-length function
\[h_m(s):=\big|\{t\in\mathbb R:(s,t)\in R_m(U)\}\big|,\;\;s\in\mathbb R.
\]
Then we have \(\|h_m\|_{L^\infty}\le L\) and 
\[
K(x,y)=\int_{\mathbb R} e^{\mi(x-y)\sqrt{1+(x+y)^2} s} h_m(s)ds.
\]
Notice that for the annulus, it is radial so \(\forall m\), \(h_m\) are the same function, denoted as \(h\). 

By integration by parts, we have\[
K(x,y)=-\frac{1}{\mi (x-y)\sqrt{1+(x+y)^2}}\int_{\R} e^{\mi(x-y)\sqrt{1+(x+y)^2} s} h'(s)ds
\]
and hence 
\begin{align*}
    Tf(x)&=\int K(x,y)f(y)dy\\
    &= \int \left(\int-\frac{e^{\mi(x-y)\sqrt{1+(x+y)^2} s} }{\mi (x-y)\sqrt{1+(x+y)^2}} f(y)dy \right) h'(s) ds.
\end{align*}

By Minkowski's inequality, 
\begin{align*}
    \|Tf\|_{L^2} &\le \left\|\left(\int-\frac{e^{\mi(x-y)\sqrt{1+(x+y)^2} s} }{\mi (x-y)\sqrt{1+(x+y)^2}} f(y)dy \right) h'(s)\right\|_{L_x^2L_s^1}\\
    & \le \left\|\left(\int-\frac{e^{\mi(x-y)\sqrt{1+(x+y)^2} s} }{\mi (x-y)\sqrt{1+(x+y)^2}} f(y)dy \right) h'(s)\right\|_{L_s^1L_x^2}\\
    &=\int \left\|\int-\frac{e^{\mi(x-y)\sqrt{1+(x+y)^2} s} }{\mi (x-y)\sqrt{1+(x+y)^2}} f(y)dy\right\|_{L_x^2}|h'(s)| ds.
\end{align*}

Notice that 
\[
\int |h'(s)|ds\lesssim \sup |h|\lesssim L.
\]

It suffices to show that 
\begin{lemma}[\cite{Pan1991},\cite{Pan1995}]
\label{annulus}
    For \(\forall s\), define \[
K_s(x,y)=\frac{1}{(x-y)\sqrt{1+(x+y)^2}}e^{\mi(x-y)\sqrt{1+(x+y)^2} s} 
\]
and \(T_s\) denote the operator associated with \(K_s\). Then \(T_s\)
is uniformly \(L^2\) bounded. 
\end{lemma}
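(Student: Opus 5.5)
The plan is to follow the Ricci--Stein strategy: near the diagonal, replace the joint phase by a variable-separated one, so the kernel becomes a modulated truncated Hilbert transform. Away from the diagonal, use $TT^*$ and the nondegeneracy of the mixed Hessian of the phase. Write $h:=x-y$, $c:=(x+y)/2$ and $\varphi(m):=\sqrt{1+m^2}$, so the phase is $sF(x,y)$ with $F(x,y)=h\,\varphi(2c)$. By symmetry ($s\mapsto -s$ corresponds to complex conjugation) we may take $s\ge 0$.

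\textbf{Step 1 (separable approximation).} Let $\Phi$ be a primitive of $t\mapsto\varphi(2t)$. Taylor expansion about the midpoint $c$ gives
\[
\Phi(x)-\Phi(y)=\Phi(c+h/2)-\Phi(c-h/2)=h\varphi(2c)+O(|h|^3),
\]
uniformly for $x,y\in[-1,1]$, because $\Phi'''$ is bounded there. Hence $sF(x,y)=s(\Phi(x)-\Phi(y))+E(x,y)$ with $|E|\lesssim s|h|^3$. The amplitude also simplifies: since $\varphi$ is smooth and $\varphi\ge 1$,
\[
\frac{1}{h\varphi(x+y)}=\frac{1}{h\varphi(2x)}+O(1).
\]
The $O(1)$ error kernel is bounded on $[-1,1]^2$, so by Schur's test it contributes $O(1)$ uniformly in $s$.

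\textbf{Step 2 (near-diagonal region).} Set $\delta:=\min(1,s^{-1/3})$ and split $K_s=K_s\mathbf 1_{\{|h|\le\delta\}}+K_s\mathbf 1_{\{|h|>\delta\}}$. On $|h|\le\delta$ write
\[
e^{\mi sF}=e^{\mi s(\Phi(x)-\Phi(y))}+e^{\mi s(\Phi(x)-\Phi(y))}\bigl(e^{\mi E}-1\bigr).
\]
\begin{itemize}
\item The second term is controlled by $|e^{\mi E}-1|/|h|\lesssim s|h|^2$. Its Schur integral is $\lesssim s\delta^3\le 1$.
\item The first term equals
\[
\frac{e^{\mi s\Phi(x)}}{\varphi(2x)}\int_{|x-y|\le\delta}\frac{e^{-\mi s\Phi(y)}f(y)}{x-y}\,dy .
\]
This is a multiplication by a unimodular function, followed by a truncated Hilbert transform, followed by multiplication by a bounded function. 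The truncated Hilbert transforms are uniformly bounded on $L^2$ via the maximal Hilbert transform. This is the point where the cancellation between the two sides of $x$ is used.
\end{itemize}
If $s\le 1$, then $\delta=1$ and the far region is empty, so the proof is complete in that case.

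\textbf{Step 3 (far region, $s>1$).} Decompose dyadically, $|h|\sim 2^{-j}$ with $2^{-j}\gtrsim s^{-1/3}$, and let $T_j$ be the corresponding piece of $T_s$, using smooth cutoffs $\chi(2^jh)$. Its amplitude $a_j=\chi(2^jh)/(h\varphi(x+y))$ has size $2^j$ and satisfies $|\partial^\alpha a_j|\lesssim 2^{j(1+|\alpha|)}$. The key computation is the mixed Hessian of the phase:
\[
\partial_y F=-\varphi+h\varphi', \qquad \partial_x\partial_y F=h\,\varphi''(x+y),
\]
with $\varphi''=(1+m^2)^{-3/2}\sim 1$. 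So on the support of $T_j$ we have $|\partial_x\partial_y(sF)|\sim s2^{-j}$.

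I would bound $T_j$ by a localized H\"ormander argument. First cut $[-1,1]$ into intervals of length $2^{-j}$; since $T_j$ only couples neighbouring intervals, it suffices to bound each localized block. For a block, compute $T_jT_j^*$ and integrate by parts in $y$. The mixed Hessian bound gives
\[
|\partial_y(sF(x,y)-sF(x',y))|\gtrsim s2^{-j}|x-x'|,
\]
while each derivative falling on the amplitude costs $2^j$. These balance as long as $s2^{-j}|x-x'|\cdot 2^{-j}\gtrsim 1$, that is $|x-x'|\gtrsim 2^{2j}/s$. Since $s2^{-3j}\ge 1$, this threshold is below $2^{-j}$, so integration by parts is effective on the relevant range. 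The resulting estimate should be
\[
\|T_j\|\lesssim 2^j\,(s2^{-j})^{-1/2}=2^{3j/2}s^{-1/2}.
\]
Summing over $2^j\lesssim s^{1/3}$ gives a geometric series dominated by its largest term, which is $\lesssim 1$, uniformly in $s$.

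\textbf{Main obstacle.} I expect the hardest step to be Step 3. One must carry out the $TT^*$ integration by parts with amplitudes and cutoffs living at scale $2^{-j}$, and still obtain exactly the decay $(s2^{-j})^{-1/2}$ with no loss. Any loss there would break the summation at the critical scale $2^{-j}\sim s^{-1/3}$, which is exactly where Step 2 stops.
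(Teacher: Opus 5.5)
Apart from citing Pan, the paper handles this lemma by reducing $T_s$ to $\widetilde T_s$ (your amplitude simplification in Step 1) and then applying Lemma \ref{ricci-stein} with $\gamma(x)=x^2$, $g(t)=s\sqrt{1+t^2}$, $\kappa\sim s$. Its primitive $G$ is your $s\Phi$, its scale $r=\kappa^{-1/3}$ is your $\delta$, and your Steps 1--2 coincide with it. (Small slip: for $s\le 1$ the region $1<|h|\le 2$ is not empty. The kernel is bounded there, so Schur's test handles it.)

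Step 3, however, fails as written. With the two-sided cutoff $\chi(2^jh)$, the lower bound $|\partial_y(sF(x,y)-sF(x',y))|\gtrsim s2^{-j}|x-x'|$ does not follow from $|\partial_x\partial_y(sF)|\sim s2^{-j}$ on the support, and it is in fact false. Write
\[
\partial_y\bigl(F(x,y)-F(x',y)\bigr)=\int_{x'}^{x}(t-y)\,\varphi''(t+y)\,dt ,
\]
with your $\varphi(m)=\sqrt{1+m^2}$. If $x>y>x'$, the path from $x'$ to $x$ crosses $t=y$, where the mixed Hessian vanishes and changes sign. For $x-y=y-x'=2^{-j}$ the integral is $O(2^{-3j})$ instead of $\gtrsim 2^{-2j}$. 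For fixed $x,x'$ with $x-x'\sim 2^{-j}$, the $y$-phase has a genuine stationary point near $(x+x')/2$, inside the support.

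You need to split each dyadic piece into $h>0$ and $h<0$ and bound the two separately. Then, for fixed $y$, the admissible $x$ form an interval on which $\partial_x\partial_yF$ has a fixed sign, and the $TT^*$ argument goes through. This is exactly why the paper works with the one-sided region $\delta<x-y<2\delta$.

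Your ``main obstacle'' is also misjudged: no sharp decay is needed. Since $s2^{-3j}$ runs over a lacunary set of numbers $\gtrsim 1$, any power decay in $s2^{-3j}$ is summable, even with logarithmic losses. The paper uses sharp cutoffs, one integration by parts (keeping the boundary terms) and Schur's test. This gives only
\[
|\widetilde K_\delta(x,z)|\lesssim\min\bigl\{\delta^{-1},\,(\kappa\delta^3|x-z|)^{-1}\bigr\},
\]
hence $\|\bar T_{g,\delta}\|\lesssim(\kappa\delta^3)^{-1/4}$, and this already suffices. Your bound $(s2^{-3j})^{-1/2}$ is correct once the cutoffs are one-sided and smooth in $y$. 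In that case two integrations by parts bound the kernel of $T_jT_j^*$ by $2^{j}(1+s2^{-2j}|x-x'|)^{-2}$. But it is more than the argument requires.
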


Actually this is equivalent to the \(L^2\) boundedness of \(\widetilde T_s\), where 
\[
\widetilde {K}_s (x,y)=\frac{1}{x-y}e^{\mi(x-y)\sqrt{1+(x+y)^2} s} 
\]
and \(\widetilde T_s\) is defined as 
\[
\widetilde T_s f(x):=\int \widetilde K_s (x,y)f(y)dy
\]

Since we have
\[
\sqrt{1+4 x^2}\,T_sf(x)=\widetilde T_sf(x)+E_sf(x)
\]
where
\[
E_s f(x):=\int
\left(\frac{\sqrt{1+4 x^2}}{\sqrt{1+(x+y)^2}}-1\right)
\widetilde K_s (x,y)f(y)dy
\]

 Notice that
 \[
\left| \frac{\sqrt{1+4x^2}-\sqrt{1+(x+y)^2}}
{(x-y)\sqrt{1+(x+y)^2}}\right|
\le1 \]
Schur's test gives \(\|E_s\|_{L^2\to L^2}\lesssim 1\), uniformly in \(s\). Since \(\sqrt{1+4x^2}\sim 1\), multiplication by this factor is bounded and invertible, proving the equivalence.

Lemma \ref{annulus} is actually a known result. For the fixed real-analytic phase \((x-y)\sqrt{1+(x+y)^2}\), the uniform \(L^2\) estimate follows from \cite[Corollary~1]{Pan1991}; see also \cite[Proposition~2.2]{Pan1995}. This successful annulus case motivates similar reductions for more general sets. However, the implicit constant may depend on the fixed phase, and these results do not quantify this dependence in terms of complexity. In Section \ref{oscillatory}, we will prove a semialgebraic version for the class of phases arising in our reductions, with constants controlled by complexity.

Alternatively, for an annulus with inner and outer radius \(R\) and \(R+d\), a direct pointwise estimate gives
\[
|K(x,y)|\lesssim \min\left\{\frac{d\sqrt R}{|x-y|^{1/2}},\frac{\sqrt R}{|x-y|^{3/2}}\right\}.
\]
The desired bound then follows from Schur's test. For a detailed calculation and some generalizations, see the second author's bachelor's thesis \cite{wang2026weighted}.
\subsection{More general cases}
Let us now consider general semialgebraic sets \(U\). In the radial case, the slice-length function \(h_m\) is independent of \(m\), which allows us to apply Minkowski's inequality after integration by parts. For general \(U\), however, \(h_m\) depends on \(m=x+y\), so the preceding argument no longer applies directly. We therefore need a further reduction to handle this dependence. Here we just give a brief idea, and there will be a cleaner proof in Section \ref{reduction} by layer-cake and cell decomposition.

Again integration by parts gives
\[
K(x,y)
=-\frac{1}{\mi(x-y)\sqrt{1+m^2}}
\int_{\R} e^{\mi(x-y)\sqrt{1+m^2}s}h_m'(s)ds.
\]
The dependence on \(m=x+y\) prevents us from taking the weight \(h_m'(s)\) outside the integral in \(y\). On a strictly monotone branch, however, we can set \(\lambda=h_m(s)\) and write \(s=a(m,\lambda)\), where \(a\) is semialgebraic. This substitution transforms \(h_m'(s)\,ds\) into \(d\lambda\). Since \(0\le h_m\le L_U\), the integration range of \(\lambda\) lies in \([0,L_U]\).
Minkowski's inequality leads us to consider kernels of the form
\[
K_\lambda(x,y)
=\frac{1}{\mi(x-y)\sqrt{1+(x+y)^2}}
e^{\mi(x-y)\sqrt{1+(x+y)^2}a(x+y,\lambda)}.
\]
As before, we can remove the factor \(\sqrt{1+(x+y)^2}\), thus the remaining task is to establish uniform \(L^2\) estimates for kernels of the form
\[
\widetilde K_\lambda(x,y)
=\frac{1}{\mi(x-y)}
e^{\mi(x-y)\sqrt{1+(x+y)^2}a(x+y,\lambda)}
\]
which aligns with the result in Section \ref{reduction}. And we will prove such a result in Section \ref{oscillatory} and \ref{complex}. 
\subsection{Counterexample for a possible generalization}
Before the proof, we caution that if we replace the phase by a general semialgebraic phase, there is no uniform $L^2 \to L^2$ estimate depending only on the complexity. To see this, let \(I=[-1,1]\) and, for \(0<\epsilon<1/10\), define
\[
\Phi_\epsilon(x,y)
=\frac{x-y}{2\pi\sqrt{(x-y)^2+\epsilon^2}},
\;\;
T_\epsilon f(x)
=\int_I
\frac{e^{\mi\Phi_\epsilon(x,y)}}{x-y}f(y)dy.
\]
These phases are semialgebraic, and their graphs are described by
\[
4 \pi ^2 z^2\big((x-y)^2+\epsilon^2\big)=(x-y)^2,
\;\;z(x-y)\ge0.
\]
Thus their complexity is bounded independently of \(\epsilon\).

Taking \(f\equiv1\), we obtain for \(x\in[-1/2,1/2]\)
\[
\operatorname{Im}T_\epsilon1(x)
\ge
2\int_\epsilon^{1/2}
\frac{\sin\big(t/\sqrt{t^2+\epsilon^2}\big)}{t}\,dt
\gtrsim \log(1/\epsilon).
\]
Here the integrand defining the imaginary part is nonnegative, and
\(1/\sqrt2\le t/\sqrt{t^2+\epsilon^2}\le1\) for \(t\ge\epsilon\). Consequently,
\[
\|T_\epsilon\|_{L^2(I)\to L^2(I)}
\gtrsim \log(1/\epsilon).
\]

\bigskip
\section{Reduction to oscillatory integrals}
\label{reduction}
Now we want to do similar reductions as Section \ref{example}, so we should rewrite the phase as \[\mi \big(\xi(x-y)+\eta(\gamma(x)-\gamma(y)\big)=\mi (x-y) (\xi+\eta \cdot \frac{\gamma(x)-\gamma(y)}{x-y}).
\]

We now define
\[
\varphi(x,y):=
\begin{cases}
\dfrac{\gamma(x)-\gamma(y)}{x-y} & \mathrm{~if ~}x\neq y,\\[6pt]
\gamma'(x) & \mathrm{~if ~}x=y.
\end{cases}
\]

Then $\varphi\in C^1([-1,1]^2)$. Since \(\gamma \in C^2([-1,1])\), it is easy to obtain this for \(x \neq y\), and for \(x=y\), differentiating in the integral sign 
\[
\varphi(x,y)=\int_0^1 \gamma'\bigl(tx+(1-t)y\bigr)\,dt.
\]

We have
\[
\varphi_x(x,x)=\varphi_y(x,x)
=\frac12\gamma''(x).
\]

Now 

\begin{align*}
    K(x,y) &=\int_U e^{\mi \big(\xi(x-y)+\eta(\gamma(x)-\gamma(y)\big)}d\xi d\eta\\
    &= \int_U e^{\mi(x-y) \big(\xi+\eta\cdot\varphi(x,y)\big)}d\xi d\eta.
\end{align*}

Doing change of variables \((\xi,\eta) \mapsto (c,\eta)=(\xi+\varphi(x,y)\eta,\eta)\), then 
\begin{align*}
    K(x,y)&= \int 1_U(c-\varphi(x,y)\eta,\eta) e^{\mi c(x-y)}dc d\eta\\
    &= \int e^{\mi c(x-y)} H(\varphi(x,y),c)dc
\end{align*}
where \(H(m,c)\) is defined as \(\left|\{\eta:(c-m\eta,\eta)\in U\}\right|\). In fact, if we define
\[
l_{m,c}=\{(\xi,\eta):\xi+m\eta=c\},
\]
then 
\[
H(m,c)= \frac{|l_{m,c}\cap U|}{\sqrt{1+m^2}}\le L_U.
\]
\begin{lemma}
\(H(m,c)\) is a semialgebraic function.    
\end{lemma}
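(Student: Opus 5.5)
The plan is to realize $H$ as the fiber-length function of a bounded semialgebraic set and then invoke Theorem \ref{slice}. Since we have already reduced to the case where $U$ is bounded, the set we build will automatically be bounded in the fiber variable.

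\emph{Step 1: the lifted set.} Define
\[
S:=\{(m,c,\eta)\in\R^3:\ (c-m\eta,\eta)\in U\}.
\]
This is semialgebraic. Indeed, $U$ is a finite union of sets cut out by conditions $P_i(\xi,\eta)=0$ and $Q_j(\xi,\eta)>0$. Substituting $\xi=c-m\eta$ gives the polynomial conditions $P_i(c-m\eta,\eta)=0$ and $Q_j(c-m\eta,\eta)>0$ in the variables $(m,c,\eta)$. Equivalently, $S$ is the preimage of $U$ under the polynomial map $(m,c,\eta)\mapsto(c-m\eta,\eta)$. The number of polynomials is unchanged, and each degree at most doubles, so the complexity of $S$ is $O(k)$.

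\emph{Step 2: boundedness.} If $U\subset B_R$, then every $(m,c,\eta)\in S$ satisfies $|\eta|\le R$. The point is that $S$ itself is unbounded in $(m,c)$, while Theorem \ref{slice} as stated requires a bounded $S\subset\R^n\times\R$.

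The cleanest fix is to intersect with a bounded box. We restrict to $(m,c)$ in any compact box $B=[-M,M]\times[-C,C]$; in our application only $m=\varphi(x,y)\in[\min\gamma',\max\gamma']$ matters. On $B$, points of $S$ have $|\eta|\le R$, so $S\cap(B\times\R)$ is bounded. This intersection is still semialgebraic, since we only added polynomial inequalities. Its fiber over $(m,c)$ is exactly $\{\eta:(c-m\eta,\eta)\in U\}$. Theorem \ref{slice} then shows that $H$ restricted to $B$ is semialgebraic.

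To get the statement on all of $\R^2$, we use the fact that a function is semialgebraic when its graph is. The graph of $H$ is semialgebraic on each open box $(-M,M)^2$. Alternatively, one can apply the semialgebraic homeomorphism $m\mapsto m/\sqrt{1+m^2}$ in each coordinate to compactify the $(m,c)$ variables, which keeps $S$ bounded. It is cleanest to state the lemma for $(m,c)$ in a bounded region.

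\emph{Step 3: complexity (the main obstacle).} The only delicate point is whether the complexity of $H$ is controlled by $k$, which the later uniform estimates need. Theorem \ref{slice} as recorded gives only semialgebraicity.

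To obtain the complexity bound, I would use the cell decomposition Theorem \ref{cell} applied to $S$ (in its $\R^3$ version, from the same reference). This decomposes each fiber into $O_k(1)$ intervals whose endpoints are semialgebraic functions $\theta_i(m,c)$ of complexity $O_k(1)$, over $O_k(1)$ cells in the $(m,c)$-plane. Then $H=\sum_i(\theta_{i}^{+}-\theta_{i}^{-})$ piecewise on these cells. This yields a graph of bounded complexity, by effective quantifier elimination as in Algorithm 11.15 of \cite{BasuPollackRoy2006}.
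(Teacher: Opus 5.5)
Your proposal is correct and follows the paper's proof. The paper likewise forms $S=\{(m,c,\eta):(c-m\eta,\eta)\in U\}$, notes it is semialgebraic by substituting into the defining polynomials of $U$, and identifies $H$ as its fiber-length function via Theorem \ref{slice}.

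Your additions are genuine refinements that the paper omits. First, $S$ is unbounded in $(m,c)$ while Theorem \ref{slice} is stated for bounded sets, and your compactification $m\mapsto m/\sqrt{1+m^2}$ repairs this; the box-by-box remark alone would not give a global statement. Second, your Step 3 supplies the complexity bound that the off-diagonal reduction in Subsection \ref{offdiagsubsec} uses implicitly.
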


\begin{proof}
    Consider \(S=\{(m,c,\eta):(c-m\eta,\eta)\in U\}\), then \(S\subset \R^3\) is a semialgebraic set since we can simply plug \((c-m\eta,\eta)\) into the defining polynomials of \(U\). Now \(H(m,c)=\left|\{\eta:(c-m\eta,\eta)\in U\}\right|=\left| \{\eta:(m,c, \eta) \in S\}\right|\) is the slicing function of \(S\), and hence is semialgebraic by Theorem \ref{slice}.
    
\end{proof}

Heuristically, when \(|x-y|\) is large, we can obtain some decay with a factor \(\dfrac{1}{|x-y|}\), while when \(|x-y|\) is small, such estimates can be ineffective. 

An easy upper bound is \[
|K(x,y)|\le \int_U 1 \,d\xi d\eta= \mathrm{Area}(U),
\]
and in fact when \(x=y\) we can't obtain anything better than that. The following theorem gives an estimate of Area\((U)\), from which we can deal with the case when \(|x-y|\) is small.
\subsection{Near diagonal estimate}\label{neardiagsubsec}
\begin{theorem}
\label{area}
   Assume that \(U \subset \R^2\) is a semialgebraic set with complexity $k$, then we have \eqref{AreaboundedbyL2}, i.e.
   \[
   \mathrm{Area}(U) \lesssim_k L_U^2.
   \]
\end{theorem}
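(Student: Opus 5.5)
By cell decomposition (Theorem \ref{cell}) we write $U$ as a union of $O_k(1)$ cells. Each cell $C\subset U$ satisfies $L_C\le L_U$, and only the $(1,1)$-cells have positive area. So it suffices to prove $\mathrm{Area}(C)\lesssim_k L_C^2$ for a single $(1,1)$-cell
\[
C=\{(x,y):x\in I,\ f(x)<y<g(x)\},
\]
with $f,g$ being $C^2$ semialgebraic functions of complexity $O_k(1)$. Using Theorem \ref{component} applied to $f'$, $g'$, $f''$, $g''$, we further split $I$ into $O_k(1)$ subintervals. On each one, $f$ and $g$ are monotone and have constant convexity sign, and the tangent angles of the graphs of $f$ and $g$ are monotone. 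Moreover, after splitting further we may assume each tangent angle varies by at most $\pi/4$ on each piece. Rotating each piece appropriately (rotations preserve $L$ and area), we reduce to the following situation. We have a region between two graphs over an interval $J$ of length $\ell$. Each boundary graph is convex or concave, and its slope lies in a fixed range where it is at most $1$ in absolute value. Let $w(x)=g(x)-f(x)$ be the vertical width. Each vertical slice has length $w(x)\le L$, and since the slopes are bounded, the horizontal-ish slices are controlled similarly.

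The key dichotomy is the following. If $\ell\le 10L$, then $\mathrm{Area}\le \ell\cdot\sup w\le 10L^2$ and we are done. Otherwise $\ell>10L$, and we must exploit the fact that no line segment inside the piece has length exceeding $L$. For this, consider the chord of $f$ (or tangent line) joining points at horizontal distance $\sim 2L$. If both $f$ and $g$ were nearly straight over a window of length $\sim 4L$, the width $w$ being substantial would let us fit a segment of length $>L$ parallel to the boundary. Quantitatively, fix a window $W\subset J$ of length $4L$. Let $\theta_f(W)$ and $\theta_g(W)$ be the total turning of the tangent angles of $f$ and $g$ over $W$. We claim that
\[
\int_W w \lesssim L^2\bigl(\theta_f(W)+\theta_g(W)\bigr)+\text{(boundary terms)},
\]
or more simply that $\sup_W w\lesssim L(\theta_f(W)+\theta_g(W))$ in the regime where $w\ll L$ is forced. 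The idea is to take the line through two points of $f+\tfrac12 w$ at distance $2L$. Its deviation from $f$ and $g$ within the window is at most $\sim L\cdot$(turning). So if the turning is small compared with $\min w/L$, this line stays inside $C$ over a length greater than $L$, which is a contradiction. Summing over $\sim \ell/L$ disjoint windows, we get
\[
\mathrm{Area}\lesssim \sum_W L\sup_W w \lesssim L^2\sum_W(\theta_f+\theta_g)+L^2\lesssim L^2\bigl(\Theta_f+\Theta_g+1\bigr),
\]
where $\Theta\le\pi/4$ is the total turning. This is the annulus-type bound.

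The main obstacle is making the window argument rigorous when $w$ varies inside the window. Since $w$ is a difference of convex/concave functions, it has only $O_k(1)$ monotonicity changes; we split further to make $w$ monotone. Within a window we then use the minimum of $w$ over the window, replacing $\sup_W w$ by $\inf_{W'}w$ on a neighbouring window, which telescopes because $w$ is monotone. The contradiction step must handle both boundaries turning in the same direction, as in the annulus. There the chord of the inner boundary curve deviates by $\sim L\theta$ from that curve, while the outer boundary stays close, which still gives $w\lesssim L\theta$. I would try to prove a clean lemma first: for a region between two graphs of convex/concave functions with slopes in $[-1,1]$ over an interval of length $4L$, containing no segment of length $L$, one has $\min w\lesssim L(\theta_f+\theta_g)$. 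All constants then depend only on the number of pieces, which is $O_k(1)$.
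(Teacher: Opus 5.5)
Your key geometric step is the same as the paper's Lemma~\ref{curvature}. However, the reduction you feed into it does not work as stated.

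Rotating a strip piece $\{x\in J,\ f<y<g\}$ tilts its vertical sides. The rotated piece is then no longer a region between the two graphs over an interval: it must be re-cut, and the tilted sides can become tops or bottoms of the new pieces. Moreover, one rotation need not give $|f'|\le 1$ and $|g'|\le 1$ simultaneously, since two arcs of tangent directions of length $\pi/4$ can together span $3\pi/4$ of $\R/\pi\mathbb{Z}$. Your midline-chord version of the lemma needs both boundaries flat, so it does not apply to what the rotation actually produces.

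The missing observation is that only one boundary matters. Let $m=\min_W w$ and let $x_0$ be the centre of $W$. The line $f(x_0)+f'(x_0)(x-x_0)+m/2$ stays strictly between $f$ and $f+m\le g$ on $W$ as soon as $|W|\sup_W|f'-f'(x_0)|<m/2$. So a slope bound on the lower boundary alone gives $\min_W w\lesssim L\,\theta_f(W)$, with no hypothesis on $g$. The same holds if $f$ is straight, which covers the tilted sides. With this one-sided lemma the rotation idea can be salvaged: choose the rotation so both curved arcs have slopes at most $\tan(3\pi/8)$, and re-cut the rotated piece by vertical lines through its $O(1)$ corners. But this has to be carried out, not asserted.

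Granting that repair, your route differs from the paper's mainly in how the pieces are summed, and both work. You use fixed windows of length $4L$ and telescope $\sup_{W_i}w\le\inf_{W_{i+1}}w$ after making $w$ monotone. That step is legitimate because $w'$ is semialgebraic of bounded complexity, not because $f,g$ are convex or concave; a difference of convex functions can oscillate.

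Fixed windows are what force the slope normalization. On a steep stretch, the vertical deviation from the tangent line over a window of length $4L$ is only bounded by $L\bigl(1+\sup_W (f')^2\bigr)\theta_f(W)$, and that extra factor is unbounded.

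The paper avoids rotations altogether:
\begin{itemize}
\item It adapts the window to the slope, taking $r_0=L_U/\sqrt{1+f'(\xi_0)^2}$, so the candidate segment always has length $2L_U$.
\item It measures bending by $|f''|/(1+(f')^2)=(\arctan f')'$, whose $L^1$ norm is the total turning.
\item Because $r_0$ varies from point to point, it sums by the layer cake $\mathrm{Area}=\int_0^{L_U}|\{h>t\}|\,dt$. It shows that $\{h>t\}$ lies, up to a set of measure $O_k(L_U)$, in the $L_U$-neighbourhood of $\{|f''|/(1+(f')^2)>t/(6L_U^2)\}$.
\end{itemize}
This needs no monotonicity of the width $h$ (your $w$), only that superlevel sets have $O_k(1)$ components.
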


By Theorem \ref{cell}, we can decompose \(U\) into \(\lesssim_k 1\) \(C^2\)-cells. Notice that only \((1,1)\)-cells will contribute to the area of \(U\), so we only need to prove for a single \((1,1)\)-cell. Hence without loss of generality, we assume
\[
U=\{(\xi,\eta): \xi \in (a,b),f(\xi)\le \eta \le f(\xi)+h(\xi)\}
\]
where \(h>0\), and \(f, h \in C^2(a,b)\). By definition, we have \(\|h\|_{L^{\infty}}\le L_U\).

To prove Theorem \ref{area}, we need the following lemma.
 \begin{lemma}
    \label{curvature}
    Let \(W=[\xi_0-r,\xi_0+r]\subset(a,b)\). Suppose that \(\forall \xi\in W\), 
    \[ h(\xi)>t \mathrm{~and~}|f''(\xi)|<\frac{t}{2r^2}.\]
    Then
    \[L_U \ge 2r\sqrt{1+f'(\xi_0)^2}.\]
    \end{lemma}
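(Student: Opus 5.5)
We will exhibit an explicit line segment of the required length inside $U$. The natural candidate is the tangent line to the lower boundary graph at $\xi_0$, shifted upward by a fixed amount depending on $t$. Set $\ell(\xi):=f(\xi_0)+f'(\xi_0)(\xi-\xi_0)+\tfrac t2$, and consider the segment $\{(\xi,\ell(\xi)):\xi\in W\}$. Its length is $2r\sqrt{1+f'(\xi_0)^2}$, since its horizontal extent is $2r$ and its slope is $f'(\xi_0)$. So it suffices to show that this segment lies in $U$ (or in its closure, which has the same line intersection lengths up to a null set). Then $|\ell\cap U|$ is at least the segment length, and the bound on $L_U$ follows.

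The key step is a Taylor estimate. For $\xi\in W$, Taylor's theorem with Lagrange remainder gives
\[
f(\xi)=f(\xi_0)+f'(\xi_0)(\xi-\xi_0)+\tfrac12 f''(\zeta)(\xi-\xi_0)^2
\]
for some $\zeta\in W$. Since $|\xi-\xi_0|\le r$ and $|f''|<t/(2r^2)$ on $W$, the remainder has absolute value less than $\tfrac12\cdot\tfrac{t}{2r^2}\cdot r^2=\tfrac t4$. Hence
\[
\ell(\xi)-f(\xi)=\tfrac t2-\tfrac12f''(\zeta)(\xi-\xi_0)^2\in\bigl(\tfrac t4,\tfrac{3t}{4}\bigr).
\]
Using $h(\xi)>t$, this gives $f(\xi)<\ell(\xi)<f(\xi)+\tfrac{3t}{4}<f(\xi)+h(\xi)$. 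So for every $\xi\in W$, the point $(\xi,\ell(\xi))$ lies in the vertical fiber of the cell $U$ over $\xi$, and in fact in its interior.

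Therefore the whole segment over $W$ lies in $U$, and its length $2r\sqrt{1+f'(\xi_0)^2}$ is at most $|\ell\cap U|\le L_U$. There is no genuine obstacle here. The only points needing care are the two facts the Lagrange remainder relies on: that $W\subset(a,b)$, so $f\in C^2$ on $W$, and that the curvature bound holds on all of $W$, which is exactly the hypothesis. The choice of the shift $t/2$ is what keeps the segment strictly between the two boundary graphs.
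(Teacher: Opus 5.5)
Your proof is correct and matches the paper's argument: the same shifted tangent line $\ell(\xi)=f(\xi_0)+f'(\xi_0)(\xi-\xi_0)+\tfrac t2$, the same Lagrange-remainder bound of $t/4$, and the same conclusion $f<\ell<f+h$ on $W$. You also correctly restrict the segment to $\xi\in W$, whereas the paper's write-up states the containment for $\xi\in(a,b)$, which is a slip.
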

    \begin{proof}[Proof of Lemma \ref{curvature}]
       Consider
       \[\ell(\xi)= f(\xi_0)+f'(\xi_0)(\xi-\xi_0)+\frac{t}{2}.\]
       Notice that \(\forall \xi\in[\xi_0-r,\xi_0+r]\), there exists \(\zeta\in[\xi_0-r,\xi_0+r]\) such that
       \[
       f(\xi)=f(\xi_0)+f'(\xi_0)(\xi-\xi_0)+\frac{1}{2} (\xi-\xi_0)^2f''(\zeta).\]
       Since \[
       \left|\frac{1}{2}(\xi-\xi_0)^2f''(\zeta)\right|\le \frac{1}{2}r^2\cdot\frac{t}{2 r^2}=\frac{t}{4},
       \]
       we have 
       \[
       f(\xi_0)+f'(\xi_0)(\xi-\xi_0)-\frac{t}{4}\le f(\xi)\le f(\xi_0)+f'(\xi_0)(\xi-\xi_0)+\frac{t}{4},
       \]
       and hence \(\forall \xi \in W\),
       \[
       f(\xi)<\ell(\xi)<f(\xi)+t\le f(\xi)+h(\xi).
       \]
       So \(\{(\xi,\ell(\xi)):\xi \in (a,b)\}\subset U\) and $ L_U \ge 2r\sqrt{1+f'(\xi_0)^2}$.
       
       \end{proof}

The lemma above says that if the set is sufficiently flat, in the sense that the second derivative is controlled, then it behaves like square and we can find a long line segment inside it. Otherwise, the set bends more rapidly and resembles an annulus, in which case we can obtain an estimate using curvature. It is also worth mentioning that both squares and annulus are sharp examples for the area estimate.
\begin{proof} [Proof of Theorem \ref{area}]

\begin{align*}
    \mathrm{Area}(U)&=\int_a^b h(\xi) d\xi= \|h\|_{L^1}\\
    &=\int_0^{+\infty}\left|\{\xi:|h(\xi)|>t\}\right| dt\\
    &=\int_0^{L_U}\left|\{\xi:h(\xi)>t\}\right|dt.
\end{align*}
Denote
\[
A_t:=\{\xi:h(\xi)>t\} \;,\;
B_t:=\left\{\xi:\frac{|f''(\xi)|}{1+f'(\xi)^2}>\frac{t}{6L_U^2}\right\}.
\]
Then \[
\mathrm{Area}(U)=\int_0^{L_U} |A_t| dt.
\]
and now we should give an upper bound of \(|A_t|\).
   
 By Theorem \ref{component}, \(A_t,B_t\) are some finite union of open intervals. 
 And \(\forall t\in[0,L_U]\), \(\exists M\lesssim_k 1\) s.t. the number of intervals in \(A_t\) and \(B_t\) \(\le M\).

As long as \(A_t\neq \emptyset\), we have \(L_U \ge t\). Assume
       \[A_t=\bigcup_{j=1}^N(a_j,b_j).\]
If \(b_j-a_j>2L_U\), let \(I_j=(a_j+L_U,b_j-L_U)\), then
       \[|A_t|\le 2L_UM+\sum_{j=1}^N|I_j|.
       \]
       
 For \(\forall \xi_0\in I_j\), denote 
       \[
       r_0=\frac{L_U}{\sqrt{1+f'(\xi_0)^2}}. \]
       
 If \(\forall \xi \in [\xi_0-r_0,\xi_0+r_0]
 \), \(|f''(\xi)|< \dfrac{t}{2r_0^2}\), we obtain a contradiction by applying Lemma \ref{curvature} and get
       \[
       L_U\ge 2r_0\sqrt{1+f'(\xi_0)^2}=2L_U.\]

 Then \(\exists \xi \in[\xi_0-r_0,\xi_0+r_0]\), such that
       \(|f''(\xi)|\ge \dfrac{t}{2r_0^2}\). If \(|f''(\xi_0)|\ge \dfrac{t}{2r_0^2}\), then we directly have \[
       \frac{|f''(\xi_0)|}{1+f'(\xi_0)^2}\ge \frac{t}{2L_U^2}.
       \]

  Now we can assume \(|f''(\xi_0)|< \dfrac{t}{2r_0^2}\), and choose \(\xi_1\) closest to \(\xi_0\), such that \(\xi_1 \in [\xi_0-r_0,\xi_0+r_0]\) and \(|f''(\xi_1)|= \dfrac{t}{2r_0^2}\). Let \(J=[\min\{\xi_0,\xi_1\},\max\{\xi_0,\xi_1\}]\), then \(\forall \eta \in J\), \(|f''(\eta)|\le \dfrac{t}{2r_0^2}\). Now we have
 
       \[
       |f'(\xi_1)-f'(\xi_0)|\le r_0\sup_{\eta 
       \in J}|f''(\eta)|\le \frac{t}{2r_0}.\]
       
 Hence
       \begin{align*}
       1+f'(\xi_1)^2 & \le 1+\left(|f'(\xi_0)|+\frac{t}{2r_0}\right)^2\le 1+2f'(\xi_0)^2+\frac{t^2}{2r_0^2}\\
       &\le 2\left(1+f'(\xi_0)^2\right)+ \frac{L_U^2}{2r_0^2}<\frac{3L_U^2}{r_0^2}
       \end{align*}

and 
\[
\frac{|f''(\xi_1)|}{1+f'(\xi_1)^2}>\frac{t}{6L_U^2}.
\]
       
 Hence \(I_j \subset N_{L_U} (B_t)\), and \(|A_t|\lesssim  L_UM+ |B_t|\).
       \[
       \mathrm{Area}(U)=\int_0^{L_U}|A_t|dt\lesssim_k L_U^2+\int_0^{L_U}|B_t|dt.\]
       
 Let \(u=\frac{t}{6L_U^2}\), then
       \begin{align*}
       \int_0^{L_U}|B_t|dt&=\int_0^{L_U}\left|\left\{\xi:\frac{|f''(\xi)|}{1+f'(\xi)^2}>\frac{t}{6L_U^2}\right\}\right|dt\\
       &=6L_U^2\int_0^{\frac{1}{6L_U}}\left|\left\{\xi:\frac{|f''(\xi)|}{1+f'(\xi)^2}>u\right\}\right|du\\
       &\le 6L_U^2\int_0^{+\infty}\left|\left\{\xi:\frac{|f''(\xi)|}{1+f'(\xi)^2}>u\right\}\right|du\\
       &=6L_U^2\left\|\frac{f''(\cdot)}{1+f'(\cdot)^2}\right\|_{L^1}.
       \end{align*}

 Notice that for \(g(\xi)=\arctan(f'(\xi))\), we have
       \[g'(\xi)=\frac{f''(\xi)}{1+f'(\xi)^2}\]
       and \(g'\) changes sign \(\lesssim_k 1\) times. Hence 

       \[\int_0^{L_U}|B_t|\,dt\le 6L_U^2\|g'\|_{L^1}=6L_U^2\operatorname{Var}(g)\lesssim_k L_U^2.\]
       
\end{proof}
Now by Theorem \ref{area} and Schur's test, the operator associated with kernel \[K(x,y)1_{|x-y|<\frac{1}{L_U}}\]
is \(L^2\) bounded by $L$ (with implicit constant depending on $K$), since
\[
\int_{\{y:|x-y|<\frac{1}{L_U}\}} |K(x,y)|dy\le U \cdot  \frac{1}{L_U} \lesssim_k L_U.
\]
\subsection{Off diagonal estimate}\label{offdiagsubsec}
Now we are left to consider the kernel 
\[
 K(x,y)= 1_{\{|x-y|>\frac{1}{L_U}\}}\int e^{\mi c(x-y)} H(\varphi(x,y),c)dc.
\]
Denote \(m:=\varphi(x,y)\), notice that for all $x,y$ and $c$, \(0\le H(m,c)\le L_U\), so 
\[
H(m,c)=\int_{0}^{L_U} 1_{\{\lambda<H(m,c)\}} d\lambda
\]
and \[
 K(x,y)= \int_{0}^{L_U}\left(1_{\{|x-y|>\frac{1}{L_U}\}}\int e^{\mi c(x-y)}1_{\{\lambda<H(m,c)\}}dc\right)\,d\lambda.
\]

Denote \[
K_\lambda(x,y)=1_{\{|x-y|>\frac{1}{L_U}\}}\int e^{\mi c(x-y)}1_{\{\lambda<H(m,c)\}}dc.
\]

Then \[
K(x,y)=\int_{0}^{L_U} K_\lambda(x,y) d\lambda
\]
and \[
|Tf(x)|=\left|\int_{\R}\int_{0}^{L}K_\lambda(x,y)f(y)d\lambda dy\right|\le \int_{0}^{L} \left| \int_{\R}K_\lambda(x,y)f(y)dy\right|d\lambda.
\]

By Minkowski's inequality, 
\[
\|Tf\|_{L^2}=\|\int_{\R}K_\lambda(x,y)f(y)dy\|_{L_x^2L_\lambda^1}\le \|\int_{\R}K_\lambda(x,y)f(y)dy\|_{L_\lambda^1L_x^2}.
\]

Hence it suffices to show that for any fixed \(\lambda\),
\[
\|\int_{\R}K_\lambda(x,y)f(y)dy\|_{L_x^2}\lesssim_k \|f\|_{L^2}.
\]

For each fixed \(\lambda\), \(E_\lambda:= \{(m,c): H(m,c)>\lambda\}\) is semialgebraic, since it is the superlevel set of \(H(m,c)\). We can do cell decomposition with regard to \(m\). To be precise, 
there exists \(I_1, \cdots, I_N \subset \R\), and for each \(1 \le j \le N\), there exists semialgebraic function \(a_{\lambda,j}\le b_{\lambda,j}\) defined on \(I_j\), such that up to a set of two-dimensional Lebesgue measure zero
\[
E_\lambda=\bigcup_{j=1}^{N} \{(m,c) \in \R^2, m\in I_j,a_{\lambda,j}(m)< c  < b_{\lambda,j} (m)\}
\]
where \(N\) and the complexity of \(a_{\lambda,j}, b_{\lambda,j}\) \(\lesssim_k 1\).

Thus 
\begin{align*}
    K_\lambda(x,y)&=1_{\{|x-y|>\frac{1}{L_U}\}}\int_{\{c: (m,c)\in E_\lambda\}} e^{\mi c(x-y)}dc\\
    &= \sum_{j=1}^{N} 1_{\{|x-y|>\frac{1}{L_U}\}} 1_{\{m \in I_j\}} \int_{a_{\lambda,j}(m)}^{b_{\lambda,j} (m)} e^{\mi c(x-y)}dc\\
    &= 1_{\{|x-y|>\frac{1}{L_U}\}} \sum_{j=1}^{N} 1_{\{m\in I_j\}}\frac{e^{\mi b_{\lambda,j} (m)(x-y)}-e^{\mi a_{\lambda,j} (m)(x-y)}}{\mi(x-y)} .
\end{align*}

Then it suffices to show that for any interval \(I \subset \R\), and semialgebraic function \(g: I \to \R\) with complexity \(\lesssim_k 1\), the operator associated with kernel 
\[
1_{\{|x-y|>\frac{1}{L_U}\}} 1_{\{\varphi(x,y) \in I\}}\frac{e^{\mi g(\varphi(x,y))(x-y)}}{x-y} 
\]
is \(L^2\) bounded by \(\lesssim_k 1\).

\bigskip
\section{Estimates of the oscillatory integrals}\label{oscillationsec}
\label{oscillatory}
\subsection{Setup and Preliminary Reduction}

In the next sections, we prove the following theorem.

\begin{theorem}\label{main}
Let $I\subset[-2,2]$ be an interval and let $g:I\to\R$
be measurable. For $\epsilon>0$, define
\[
K_g(x,y):=
1_{\{|x-y|>\epsilon\}}
1_{\{\varphi(x,y)\in I\}}
\frac{e^{\mi (x-y)g(\varphi(x,y))}}{x-y}
\]
and
\[
T_gf(x):=\int_{[-1,1]} K_g(x,y)f(y)\,dy.
\]

If $g$ is semialgebraic with complexity $\lesssim_k 1$, then
\[
\|T_gf\|_{L^2([-1,1])}
\lesssim_k \|f\|_{L^2([-1,1])},
\]
uniformly in $\epsilon>0$.
\end{theorem}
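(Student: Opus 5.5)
We will prove Theorem \ref{main} by a Ricci--Stein type argument. The guiding idea is to approximate the joint modulation $(x-y)g(\varphi(x,y))$ near the diagonal by a separated phase $F(x)-G(y)$, and to use almost orthogonality ($TT^*$) away from the diagonal. Write $\Psi(x,y):=(x-y)\,g(\varphi(x,y))$.

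\textbf{Step 1: regularize $g$.} Using Theorems \ref{algebraic} and \ref{cell}(2), partition $I$ into $O_k(1)$ intervals on which $g$ is $C^3$ and algebraic. Refining further by the partition result \eqref{derivatebounduniform} (Theorem \ref{curvature-partition}) for $s=0,1,2$, we may assume that on each piece $J$
\[
|g^{(s+1)}(m)|\lesssim_k \frac{|g^{(s)}(m)|}{\operatorname{dist}(m,\partial J)},\qquad s=0,1,2 .
\]
The indicator $1_{\{\varphi\in J\}}$ is harmless. Since $\partial_x\varphi,\partial_y\varphi\in[\tfrac12,1]$, the set $\{\varphi(x,y)\in J\}$ is a region between two monotone curves in the $(x,y)$-square. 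Truncated Hilbert-type kernels restricted to such regions are bounded after a Whitney decomposition, at the cost of maximal truncations, which are again bounded. So we fix one $J$ and write $d(m)=\operatorname{dist}(m,\partial J)$.

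\textbf{Step 2: Whitney decomposition and local phase approximation.} Decompose the square $\{(x,y):\varphi(x,y)\in J\}$ into Whitney pieces according to $d(\varphi(x,y))$. On a piece where $d\sim\delta$, the quantity $\varphi$ varies with $(x,y)$ at unit rate, so the region is a strip of width $\sim\delta$ around a level curve of $\varphi$.

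For the part of the kernel with $|x-y|\le \rho$, where $\rho$ is a scale to be chosen depending on the size of $g,g',g''$ at that location, Taylor expand $\Psi$ around the diagonal. Expanding $\varphi(x,y)=\gamma'(\tfrac{x+y}{2})+O(|x-y|^2)$-type corrections, we write to third order
\[
\Psi(x,y)=F(x)-G(y)+O\big(|x-y|^3\,\|\partial^3\Psi\|\big),
\]
with $F(x)-G(y)$ chosen so that the antisymmetric second-order terms cancel. This is the third-order approximation referred to in the introduction: the odd structure $(x-y)\cdot(\text{function of }x+y)$ makes the even-order cross terms separable.

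With $F,G$ in hand, $e^{\mi(F(x)-G(y))}/(x-y)$ is a modulated truncated Hilbert transform, hence bounded. The error is bounded by $\int_{|x-y|\le\rho}|x-y|^2\,\|\partial^3\Psi\|\,dy\lesssim \rho^3\|\partial^3\Psi\|$ via Schur's test. The derivative bounds from Step 1 give $\|\partial^3\Psi\|\lesssim$ a scale-invariant quantity, so we choose $\rho$ with $\rho^3\|\partial^3\Psi\|\lesssim1$.

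\textbf{Step 3: far part via $TT^*$.} For $|x-y|\ge\rho$, decompose dyadically $|x-y|\sim 2^j\rho$. Compute $T_jT_j^*$. Its kernel is an oscillatory integral in $y$ with phase $\Psi(x,y)-\Psi(x',y)$, whose $y$-derivative is controlled from below using the nondegeneracy $\partial_x\partial_y\Psi\neq0$. This nondegeneracy comes from $\gamma''\ge1$ together with the size of $g$ and $g'$ at scales where $\rho$ is small.

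Integrating by parts, or applying van der Corput of second order with semialgebraic monotonicity counts so that constants depend only on $k$, should give $\|T_j\|\lesssim 2^{-\alpha j}$. Summing over $j$ then gives a bound independent of $\epsilon$. The truncation $|x-y|>\epsilon$ just removes some pieces or is a smooth-free sharp cutoff absorbed into the maximal truncated Hilbert transform.

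\textbf{Main obstacle.} The hard step is Step 3, together with the uniform choice of $\rho$. The scale $\rho$ must depend on $x$ (through $d(\varphi)$ and the sizes of $g^{(s)}$), and near $\partial J$ the derivatives blow up. The dyadic decay must therefore be proven uniformly over Whitney pieces, and the pieces must then be summed without a logarithm. I would first try to make $\rho$ comparable to $\min\{d(\varphi),\,|g'|^{-1/2},\dots\}$, treat each Whitney strip separately, and use Cotlar--Stein across strips. Bounded overlap of the strips is ensured by the bounded number of monotonicity intervals of $g$ from Theorem \ref{component}.
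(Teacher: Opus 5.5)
Your architecture matches the paper's: separated phase near the diagonal, $TT^*$ off it, and the semialgebraic partition giving $d_J|g'''/g''|\lesssim1$. However, the steps that carry the uniformity are misidentified or left open. The fact you are missing is
\[
\partial_x\partial_y\Psi(x,y)=(x-y)\,g''(\varphi(x,y))\,\varphi_x(x,y)\,\varphi_y(x,y),
\]
which holds because $\partial_x\partial_y[(x-y)\varphi]=\partial_x\partial_y[\gamma(x)-\gamma(y)]=0$. So both the obstruction to separating the phase and the source of off-diagonal decay depend only on $g''$. The sizes of $g$ and $g'$ play no role: for $g(t)=at+b$ the phase is exactly $a(\gamma(x)-\gamma(y))+b(x-y)$, and there is no nondegeneracy at all.

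Your error bound $|x-y|^3\|\partial^3\Psi\|$ misses this. $\partial_x^3\Psi$ contains $3g'(\varphi)\varphi_{xx}$ with $\varphi_{xx}(x,x)=\gamma'''(x)/3$, which need not exist for $\gamma\in C^2$ and is unbounded in $g$ anyway (take $g(t)=at$, $|a|\to\infty$). Likewise your expansion $\varphi=\gamma'(\frac{x+y}{2})+O(|x-y|^2)$ needs $\gamma'''$. The correct statement is this: with $G(x)=\int^x g(\gamma'(t))\,dt$, one has $|\Psi(x,y)-(G(x)-G(y))|\lesssim\sup|g''|\,|x-y|^3$. This follows from the second-order Taylor expansion of $g$ at $\varphi(x,y)$ and $\int_y^x(\gamma'(t)-\varphi(x,y))\,dt=0$, and uses only $\gamma\in C^2$.

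The right scale is therefore $\rho=\kappa^{-1/3}$ with $\kappa\sim|g''|$. Off the diagonal, $|\partial_y(\Psi(x,y)-\Psi(z,y))|\gtrsim\kappa\delta|x-z|$ gives $\|T_\delta\|\lesssim_K(\kappa\delta^3)^{-1/4}$; this is Lemma \ref{ricci-stein}, under $\kappa\le|g''|\le K\kappa$ and $|g'''|\le K\kappa$. Your tentative choice $\rho\sim\min\{d,|g'|^{-1/2},\dots\}$, with nondegeneracy attributed to $g,g'$, fails. If $\rho\ll\kappa^{-1/3}$, the $TT^*$ bound gives no decay on the $\sim\log(\kappa^{-1/3}/\rho)$ dyadic shells between $\rho$ and $\kappa^{-1/3}$, and you lose a logarithm.

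The second gap is the summation over Whitney pieces. You flag it as the main obstacle and resolve it only by invoking Cotlar--Stein, with overlap controlled by monotonicity intervals of $g$. No almost-orthogonality estimate is supplied, and monotonicity counts do not give one. The paper never uses an $x$-dependent scale.

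First, Proposition \ref{kernel-equivalence} replaces $1_{\{\varphi(x,y)\in I\}}$ by $1_{\{\gamma'(x)\in I\}}1_{\{\gamma'(y)\in I\}}$. Since $\varphi$ lies between $\gamma'(x)$ and $\gamma'(y)$, the difference is dominated in absolute value by the Hilbert kernel $1/(s+t)$. Because the kernel oscillates, this absolute-value domination, not maximal truncations, is what makes your indicator harmless. It also lets one localize $x$ and $y$ separately.

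Next, work on $(0,L)$ with $|tg'''/g''|\le K$; only the case $s=2$ of your Step 1 is needed. The pieces $I_l=(2^l,2^{l+1})$ have $|g''|$ constant up to a factor $2^K$. The rescaling $\Gamma(x)=2^{-2l}\gamma(2^lx)$, $G_l(t)=2^lg(2^lt)$ puts each piece under the hypotheses of Lemma \ref{ricci-stein}, with constants depending only on $K$.

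The pieces are then summed by hand (Lemma \ref{one-endpoint}):
\begin{itemize}
\item For $|l-m|\ge2$ one has $|x-y|\gtrsim\gamma'(x)+\gamma'(y)$. These interactions are therefore dominated by the Hardy--Hilbert kernel $1/(u+v)$ in $u=\gamma'(x)$, $v=\gamma'(y)$ (Lemma \ref{hilbert}).
\item The terms with $|l-m|\le1$ have disjoint output supports and bounded input overlap.
\end{itemize}
Degeneracy at both endpoints is handled by splitting the interval at its midpoint and subtracting a linear function from $\gamma$.
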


For the same $I$, $g$, and $\epsilon$, define
\[
\bar K_g(x,y):=
1_{\{|x-y|>\epsilon\}}
1_{\{\gamma'(x)\in I\}}
1_{\{\gamma'(y)\in I\}}
\frac{e^{\mi (x-y)g(\varphi(x,y))}}{x-y}
\]
and let $\bar T_g$ denote the corresponding integral operator
on $[-1,1]$. We will show that the uniform $L^2$ boundedness of $T_g$ is equivalent to
that of $\bar T_g$. Each formulation has its own advantages, so we will switch between them as needed in what follows.

\begin{lemma}\label{hilbert}
The operator
\[
Cf(x):=\int_0^\infty \frac{f(y)}{x+y}\,dy,
\qquad x>0
\]
is bounded on $L^2(0,\infty)$.  Also see  \cite[Chapter~9]{HardyLittlewoodPolya1952}.
\end{lemma}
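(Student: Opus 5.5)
The plan is to prove this with Schur's test using the power weight $y^{-1/2}$. The kernel $k(x,y)=\frac{1}{x+y}$ is positive, symmetric, and homogeneous of degree $-1$, which is exactly what makes such a weight work.

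First compute the basic integral. By the substitution $y=xu$,
\[
\int_0^\infty \frac{y^{-1/2}}{x+y}\,dy = x^{-1/2}\int_0^\infty \frac{u^{-1/2}}{1+u}\,du = \pi\, x^{-1/2},
\]
where the last integral equals $\pi$ (for instance via $u=v^2$, which turns it into $2\int_0^\infty \frac{dv}{1+v^2}$). So $p(y)=y^{-1/2}$ satisfies $\int_0^\infty k(x,y)p(y)\,dy=\pi p(x)$. By symmetry of $k$, the same identity holds with the roles of $x$ and $y$ exchanged.

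Next apply Cauchy--Schwarz with the factorization
\[
k(x,y)=\Bigl(k(x,y)^{1/2}\bigl(\tfrac{x}{y}\bigr)^{1/4}\Bigr)\cdot\Bigl(k(x,y)^{1/2}\bigl(\tfrac{y}{x}\bigr)^{1/4}\Bigr).
\]
This gives
\[
|Cf(x)|^2\le \Bigl(\int_0^\infty k(x,y)\bigl(\tfrac{x}{y}\bigr)^{1/2}dy\Bigr)\Bigl(\int_0^\infty k(x,y)\bigl(\tfrac{y}{x}\bigr)^{1/2}|f(y)|^2dy\Bigr).
\]
By the identity above, the first factor equals $\pi$. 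Integrating in $x$ and using Tonelli, we get
\[
\|Cf\|_2^2\le \pi\int_0^\infty |f(y)|^2\, y^{1/2}\Bigl(\int_0^\infty k(x,y)\,x^{-1/2}dx\Bigr)dy.
\]
By the symmetric identity, the inner integral is $\pi y^{-1/2}$. Hence $\|Cf\|_{L^2(0,\infty)}\le \pi\|f\|_{L^2(0,\infty)}$.

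There is no real obstacle. The only point needing care is measurability and finiteness: apply the argument to $|f|$ first, so that Tonelli justifies every interchange and $Cf$ is defined almost everywhere. As an alternative check, one can use the Mellin transform, under which $C$ becomes multiplication by $\pi/\cosh(\pi t)$, bounded by $\pi$ on the critical line. I would present the Schur argument since it is self-contained, and cite \cite[Chapter~9]{HardyLittlewoodPolya1952} for the sharp constant.
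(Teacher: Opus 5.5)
Your proof is correct and gives the sharp bound $\|C\|_{L^2\to L^2}\le\pi$, but it is organized differently from the paper's. The paper substitutes $y=xt$ to write $Cf(x)=\int_0^\infty f(xt)\,\frac{dt}{1+t}$. This exhibits $C$ as an average of dilations against the measure $dt/(1+t)$. The paper then applies Minkowski's integral inequality with $\|f(t\,\cdot)\|_{L^2}=t^{-1/2}\|f\|_{L^2}$, so the constant appears as the same integral $\int_0^\infty t^{-1/2}(1+t)^{-1}\,dt=\pi$ that you evaluate. You instead run Schur's test with the test function $y^{-1/2}$ via a weighted Cauchy--Schwarz, which is the classical Hardy--Littlewood--P\'olya route. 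The two arguments are closely related: $x^{-1/2}$ is exactly the function fixed by the $L^2$-normalized dilations $f\mapsto t^{1/2}f(t\,\cdot)$, which is why it is the right Schur weight for a kernel homogeneous of degree $-1$. The paper's version is shorter and needs no choice of weight, since homogeneity does the work directly. Yours requires finding the weight, but it is more flexible: Schur's test needs only a nonnegative kernel and a suitable test function, not exact homogeneity. Your remarks on applying the argument to $|f|$ to justify Tonelli, and your Mellin-multiplier check, are also correct.
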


\begin{proof}
Changing variables $y=xt$ and applying Minkowski's inequality,
we obtain
\[
\begin{aligned}
\|Cf\|_2
&\le
\int_0^\infty \frac{\|f(t\,\cdot)\|_2}{1+t}\,dt\\
&=
\|f\|_2\int_0^\infty \frac{t^{-1/2}}{1+t}\,dt
=
\pi\|f\|_2.
\end{aligned}
\]
\end{proof}

\begin{proposition}
\label{kernel-equivalence}
The $L^2$ boundedness of $T_g$ is equivalent to
that of $\bar T_g$.
\end{proposition}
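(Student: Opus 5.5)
The plan is to show that $K_g-\bar K_g$ is dominated pointwise by a kernel bounded on $L^2$, uniformly in $g$ and $\epsilon$. Since $|e^{\mi(\cdot)}|=1$, we have
\[
|K_g(x,y)-\bar K_g(x,y)|\le \frac{\bigl|1_{\{\varphi(x,y)\in I\}}-1_{\{\gamma'(x)\in I\}}1_{\{\gamma'(y)\in I\}}\bigr|}{|x-y|}.
\]
So it suffices to show that the operator with kernel $D(x,y)/|x-y|$ is bounded on $L^2([-1,1])$ with an absolute constant, where $D$ is the numerator above. Boundedness of $T_g$ and of $\bar T_g$ are then equivalent.

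First we describe where $D\neq 0$. Because $1\le\gamma''\le2$, $\gamma'$ is strictly increasing. By the mean value formula $\varphi(x,y)=\int_0^1\gamma'(tx+(1-t)y)\,dt$, the value $\varphi(x,y)$ lies strictly between $\gamma'(x)$ and $\gamma'(y)$ when $x\neq y$. Write $I=(\alpha,\beta)$ (endpoints open or closed, possibly infinite) and set $p=(\gamma')^{-1}(\alpha)$, $q=(\gamma')^{-1}(\beta)$, extended to $\pm\infty$ or clipped to $\pm1$ as appropriate. Then $\gamma'(x)\in I$ exactly when $x\in(p,q)$.

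If both $x,y\in(p,q)$, then $\varphi(x,y)\in I$ by betweenness, so $D=0$. If neither endpoint of the segment $[x,y]$ (assume $x<y$) lies in $(p,q)$ and the segment does not straddle the whole interval, then $\varphi\notin I$ and again $D=0$. Hence $D\neq0$ forces either exactly one of $x,y$ to lie in $(p,q)$, or $x<p<q<y$.

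In every such case the segment $[x,y]$ contains $p$ or $q$. So
\[
D(x,y)\le 1_{\{p\in[x,y]\}}+1_{\{q\in[x,y]\}}.
\]

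Next we estimate the model kernel $1_{\{p\text{ between }x,y\}}/|x-y|$. When $p$ lies between $x$ and $y$, we have $|x-y|=|x-p|+|y-p|$. Substituting $u=|x-p|$ and $v=|y-p|$, the kernel becomes $1/(u+v)$ acting between the two half-lines on opposite sides of $p$. By Lemma \ref{hilbert} this is bounded on $L^2$ with norm at most $\pi$ per pair of sides, and the same holds for $q$. The cutoff $1_{\{|x-y|>\epsilon\}}$ only reduces a nonnegative kernel, so the bound is uniform in $\epsilon$. This gives $\|T_g-\bar T_g\|_{L^2\to L^2}\le 4\pi$, independent of $g$, $I$ and $\epsilon$, which proves the equivalence.

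The main obstacle is the case analysis showing that $D\neq0$ forces $p$ or $q$ to lie between $x$ and $y$. This rests entirely on the betweenness of $\varphi$ and the monotonicity of $\gamma'$. We need some care with the degenerate cases where $p$ or $q$ falls outside $[-1,1]$, and with the diagonal $x=y$, which has measure zero.
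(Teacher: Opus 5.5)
Your proof is correct and follows the paper's argument. Both use the monotonicity of $\gamma'$ and the betweenness of $\varphi$ to dominate $|K_g-\bar K_g|$ by $\sum_c 1_{\{c \text{ between } x,y\}}/|x-y|$, where $c$ ranges over the at most two points with $\gamma'(c)\in\{\alpha,\beta\}$, and then reduce to the kernel $1/(s+t)$ of Lemma \ref{hilbert}; your explicit case analysis and the uniform constant $4\pi$ simply spell out details the paper leaves implicit.
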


\begin{proof}
We may assume $I=(\alpha,\beta)$, since changing the endpoints
only affects null sets. Set
\[
\mathcal C:=
\{c\in(-1,1):\gamma'(c)\in\{\alpha,\beta\}\}.
\]

Then \(\mathcal C\) is finite. And since $\gamma'$ is strictly increasing and $\varphi(x,y)$
lies between $\gamma'(x)$ and $\gamma'(y)$, we have
\[
|K_g(x,y)-\bar K_g(x,y)|
\le
\sum_{c\in\mathcal C}
\frac{1_{\{x<c<y\}}+1_{\{y<c<x\}}}{|x-y|}.
\]

 For each $c\in\mathcal C$,
the change of variables $x=c-s$, $y=c+t$ identifies the kernel
$1_{\{x<c<y\}}/|x-y|$ with the kernel $1/(s+t)$,
where $s,t>0$. The other term is treated similarly.
Thus Lemma~\ref{hilbert} gives
\[
\|(T_g-\bar T_g)f\|_{L^2([-1,1])}
\lesssim \|f\|_{L^2([-1,1])}.
\]

\end{proof}

\subsection{Main ideas for dealing with the phase}\label{thirdordersubsec}
Now we consider the oscillatory integrals in Theorem \ref{main}. The kernel \(K_g\) is the Calderon Zygmund kernel \(\frac{1_{|x-y|>\epsilon}}{x-y}\) times an oscillatory factor with phase \(\mi (x-y)g(\varphi(x,y))\). If the phase function can be written as \(\mi (F(x)-G(y))\), then
\[
T_gf(x)=e^{\mi F(x)}\int \frac{1}{x-y} (e^{-\mi G(y)}f(y)) dy
\]
and the \(L^2\) boundedness follows from (modulated) Hilbert transform.

Unfortunately, in general we do not have this clean form of $\mi (F(x)-G(y))$ for the Kernel. An important method we adapt throughout our proof is that in Ricci-Stein's paper \cite{RicciStein1987}. We begin with a brief outline of the proof strategy. When \(|x-y|\) is small, we approximate the phase with the form \(\mi (F(x)-G(y))\); while when \(|x-y|\) is large, we do a dyadic decomposition and \(TT^*\) method again, and use van der Corput method to obtain the decay.  As in the previous example, the estimates away from the
diagonal $x=y$ do not rely on cancellation between paired points.

We now discuss the phase approximation, as in \cite{RicciStein1987}.
For simplicity, we assume that $g$ is smooth, and set
\[
P(x,y):=(x-y)g(\varphi(x,y)).
\]

Notice that $P(x,y)$ can be represented in the form $F(x)-G(y)$
if and only if $P_{xy}=0$, so we first calculate $P_{xy}$ and obtain
\begin{align*}
P_{xy}(x,y)
&=(x-y)g''(\varphi(x,y))\varphi_x(x,y)\varphi_y(x,y)\\
&\quad
+g'(\varphi(x,y))
\bigl(\varphi_y(x,y)-\varphi_x(x,y)
+(x-y)\varphi_{xy}(x,y)\bigr).
\end{align*}
In fact, this expression can be simplified further, since
\begin{align*}
&\varphi_y(x,y)-\varphi_x(x,y)+(x-y)\varphi_{xy}(x,y)\\
&\qquad=\partial_x\partial_y\bigl((x-y)\varphi(x,y)\bigr)\\
&\qquad=\partial_x\partial_y\bigl(\gamma(x)-\gamma(y)\bigr)=0.
\end{align*}
Hence
\[
P_{xy}(x,y)
=(x-y)g''(\varphi(x,y))\varphi_x(x,y)\varphi_y(x,y),
\]
which depends on $g$ only through its second derivative, and this
is crucial to our whole proof.

Now we cannot in general expect to represent $P$ without an error
term, so we have to consider an approximation. A simple approximation is
\[
P(x,y)
=xg(\gamma'(x))-yg(\gamma'(y))+O(|x-y|)
\]
locally near the diagonal. To improve the order of the error, we seek
a decomposition
\[
P(x,y)=F(x)-G(y)+E(x,y)
\]
such that
\[
E(x,x)=E_x(x,x)=E_y(x,x)=0.
\]
Differentiating this identity with respect to $x$ and $y$, we have
\begin{align*}
P_x(x,y)&=F'(x)+E_x(x,y),\\
P_y(x,y)&=-G'(y)+E_y(x,y).
\end{align*}
Since $P(x,x)=0$, the condition $E(x,x)=0$ implies $F=G$, and
evaluating the two identities above on the diagonal gives
\[
F'(x)=G'(x)=g(\gamma'(x)).
\]
This choice is unique up to a common additive constant, and for a fixed
base point $x_0$ we may take
\[
F(x)=G(x)=\int_{x_0}^x g(\gamma'(t))\,dt.
\]

Now we estimate the error term $E(x,y)$ and see what order in
$|x-y|$ we can obtain, using a Taylor expansion about the diagonal:
\begin{align*}
E(x,y)
&=E(x,x)+(y-x)E_y(x,x)
+\frac{(y-x)^2}{2}E_{yy}(x,x)\\
&\quad+\frac{(y-x)^3}{6}E_{yyy}(x,x)+O(|x-y|^4).
\end{align*}
We already know that $E(x,x)=E_y(x,x)=0$, so it remains to calculate
the higher derivatives on the diagonal, and we start from the identity
\[
E_{xy}(x,y)=P_{xy}(x,y)
=(x-y)g''(\varphi(x,y))\varphi_x(x,y)\varphi_y(x,y).
\]
Differentiating $E_y(x,x)=0$ along the diagonal, we have
\[
0=\frac{d}{dx}E_y(x,x)
=E_{xy}(x,x)+E_{yy}(x,x),
\]
and hence
\[
E_{yy}(x,x)=-E_{xy}(x,x)=0.
\]
Now differentiating $E_{yy}(x,x)=0$ once more along the diagonal,
we obtain
\[
E_{xyy}(x,x)+E_{yyy}(x,x)=0.
\]
On the other hand, using
$\varphi_x(x,x)=\varphi_y(x,x)=\frac{1}{2}\gamma''(x)$, we have
\begin{align*}
E_{xyy}(x,x)
&=\left.
\partial_y\bigl((x-y)g''(\varphi(x,y))
\varphi_x(x,y)\varphi_y(x,y)\bigr)
\right|_{y=x}\\
&=-g''(\varphi(x,x))\varphi_x(x,x)\varphi_y(x,x)\\
&=-\frac{1}{4}g''(\gamma'(x))\bigl(\gamma''(x)\bigr)^2.
\end{align*}
Hence
\[
E_{yyy}(x,x)
=\frac{1}{4}g''(\gamma'(x))\bigl(\gamma''(x)\bigr)^2,
\]
and substituting this into the Taylor expansion gives
\[
E(x,y)
=-\frac{1}{24}g''(\gamma'(x))\bigl(\gamma''(x)\bigr)^2(x-y)^3
+O(|x-y|^4).
\]
Thus, when $g''(\gamma'(x))(\gamma''(x))^2\neq0$, the correct order
to expect is $3$, and this is what we will make explicit in what follows.
\subsection{Estimates under uniform derivative bounds}
Now we first deal with the case where \(|g''|\) has a uniform lower bound, in particular since \(g\in C^3(I)\), \(g''\) is positive or negative on \(I\). To be precise, we have the following lemma.
\begin{lemma}
\label{ricci-stein}
    Let \(I=(a,b)\subset [-2,2]\) be an interval, \(g \in C^3(I)\). Assume that for \(\kappa>0\) and \(K\ge 1\), \(\forall t \in I\), we have
    \[
    \kappa \le |g''(t)| \le K\kappa, \;\; |g'''(t)|\le K\kappa.
    \]
    Then 
    \[
    \|\bar T_g f\|_{L^2} \lesssim_K \|f\|_{L^2}
    \mathrm{~and ~equivalently ~} \|T_g f\|_{L^2} \lesssim_K \|f\|_{L^2}.\]
   
\end{lemma}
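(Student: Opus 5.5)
We follow the Ricci--Stein scheme, splitting $\bar T_g$ at the natural scale $\delta:=\kappa^{-1/3}$ suggested by the third-order computation of Subsection \ref{thirdordersubsec}. Recall that the error satisfies $E(x,y)\approx -\tfrac{1}{24}g''(\gamma'(x))\gamma''(x)^2(x-y)^3$. If $\kappa\lesssim 1$ we simply take $\delta=\infty$ (only the local part occurs).

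\emph{Local part, $|x-y|\le\delta$.} Set $F(x)=\int_{x_0}^x g(\gamma'(t))\,dt$ and write $P=F(x)-F(y)+E(x,y)$. Then
\[
e^{\mi P}=e^{\mi(F(x)-F(y))}+e^{\mi(F(x)-F(y))}\bigl(e^{\mi E}-1\bigr).
\]
The first term gives a modulated truncated Hilbert transform. The truncation is to $\{\epsilon<|x-y|\le\delta\}$ together with the indicator factors $1_{\gamma'(x)\in I}$ and $1_{\gamma'(y)\in I}$; since $\gamma'$ is increasing, these factors just restrict $x$ and $y$ to an interval. This term is therefore bounded uniformly. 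For the second term we need the error bound
\[
|E(x,y)|\lesssim_K \kappa|x-y|^3 \quad\text{for all } x,y \text{ with } \gamma'(x),\gamma'(y)\in I.
\]
This is a genuine estimate rather than a Taylor asymptotic, and we get it from the exact identity $E_{xy}=P_{xy}=(x-y)g''(\varphi)\varphi_x\varphi_y$ together with $E(x,x)=E_y(x,x)=0$. Integrating $E_{xy}$ over the triangle between the diagonal and $(x,y)$ gives $|E|\le \sup|g''|\cdot\sup|\varphi_x\varphi_y|\cdot|x-y|^3\lesssim K\kappa|x-y|^3$; here $\varphi$ is only $C^1$, so $\varphi_x,\varphi_y$ are bounded by $\sup|\gamma''|\le 2$. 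The kernel is then bounded by $\kappa|x-y|^2$ on $|x-y|\le\kappa^{-1/3}$, and Schur's test gives an $O_K(1)$ bound.

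\emph{Far part, $|x-y|>\delta$.} Decompose dyadically, $|x-y|\sim 2^j\delta$ for $j\ge0$, and call the pieces $T_j$, with kernels $(x-y)^{-1}e^{\mi P}\chi_j$. Schur's test alone gives $\|T_j\|\lesssim 1$, so we need decay $\|T_j\|\lesssim (2^j)^{-\sigma}$ for some $\sigma>0$, which we get by $TT^*$. The kernel of $T_jT_j^*$ is
\[
\int \frac{e^{\mi(P(x,y)-P(z,y))}\chi_j(x-y)\chi_j(z-y)}{(x-y)(z-y)}\,dy .
\]
Its phase has $\partial_y$ derivative $P_y(x,y)-P_y(z,y)=\int_z^x P_{xy}(s,y)\,ds$. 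On the support, $|s-y|\sim 2^j\delta$ and $g''$ has a fixed sign, and $\varphi_x,\varphi_y\ge \tfrac14$ because $\varphi_x=\int_0^1 t\gamma''\,dt$ and $\gamma''\ge1$. Hence $|P_{xy}|\sim_K \kappa 2^j\delta$, at least when $s-y$ has a fixed sign; we split the dyadic annulus into its two sides $x>y$ and $x<y$ so that this holds. Thus $|\partial_y(\text{phase})|\gtrsim \kappa 2^j\delta|x-z|$.

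The main obstacle is the amplitude: $\varphi$ is only $C^1$, so the phase has only one controlled derivative in $y$ and we cannot use second-derivative van der Corput. We plan to integrate by parts once. This requires monotonicity of $\partial_y(\text{phase})$, or alternatively a bound on $\partial_y^2(\text{phase})$ via $P_{xyy}$, which involves $g'''\varphi_y^2\varphi_x$ and $\varphi_{xy}$. If $C^1$ regularity is insufficient, we would first approximate $\gamma$ by smooth curves with the same bounds $1\le\gamma''\le2$, since all constants depend only on these bounds and on $K$.

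Granting this, the $y$-integral is $\lesssim (2^j\delta)^{-2}\min\bigl(2^j\delta,\,(\kappa 2^j\delta|x-z|)^{-1}\bigr)$, the amplitude variation costing $(2^j\delta)^{-1}$ per unit length. Schur's test in $z$ then gives
\[
\|T_jT_j^*\|\lesssim (2^j\delta)^{-1}\Bigl(\frac{1}{\kappa (2^j\delta)^2}\Bigr)\bigl(1+\log\bigr)\lesssim 2^{-3j}\log 2^j,
\]
using $\kappa\delta^3=1$. Summing $\|T_j\|\lesssim 2^{-j}$ over $j$ and combining with the local part gives $\|\bar T_g\|\lesssim_K 1$, and Proposition \ref{kernel-equivalence} transfers the bound to $T_g$.
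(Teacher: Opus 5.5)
Your outline is the same as the paper's proof. You split at $\kappa^{-1/3}$. Near the diagonal you use a modulated truncated Hilbert transform, an $O(K\kappa|x-y|^3)$ phase error and Schur. Away from it you use one-sided dyadic pieces, $TT^*$, the lower bound $|\Phi'|\gtrsim\kappa D|x-z|$ (with $D=2^j\delta$) from the fixed sign of $P_{xy}$, one integration by parts, and Schur. Your local error bound, obtained by integrating $E_{xy}=P_{xy}$ over the triangle $\{y\le s\le u\le x\}$ using $E(x,x)=E_y(x,x)=0$, is a valid alternative to the paper's. The paper instead Taylor-expands $g$ about $\varphi(x,y)$ and kills the linear term with $\int_y^x(\gamma'(t)-\varphi(x,y))\,dt=0$.

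The gap is the step you explicitly grant. $\partial_y$ of the phase has no reason to be monotone, so integrating by parts needs $|\Phi''(y)|\lesssim K\kappa|x-z|$. This is exactly where the hypothesis $|g'''|\le K\kappa$ is needed, and as written your argument never uses it.

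Your diagnosis of the obstruction is also mistaken. $\varphi$ can fail to be $C^2$ only on the diagonal. On the support of the far pieces $|t-y|\sim D>0$, and there $\varphi=(\gamma(x)-\gamma(y))/(x-y)$ is $C^2$. The missing idea is to differentiate the identity $(x-y)\varphi(x,y)=\gamma(x)-\gamma(y)$, which gives
\[
(x-y)\varphi_{xy}=\varphi_x-\varphi_y,\qquad (x-y)\varphi_{yy}=2\varphi_y-\gamma''(y),
\]
both $O(1)$. Insert these into
\[
\partial_t\partial_y^2P(t,y)=(t-y)g'''(\varphi)\varphi_x\varphi_y^2-g''(\varphi)\varphi_x\varphi_y+(t-y)g''(\varphi)\bigl(\varphi_{xy}\varphi_y+\varphi_x\varphi_{yy}\bigr),
\]
and use $|t-y|\lesssim 1$. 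This gives $|\partial_t\partial_y^2P|\lesssim K\kappa$, hence $|\Phi''|\lesssim K\kappa|x-z|$. With amplitude $b(y)=1/((x-y)(z-y))$, the extra integration-by-parts term $\int|b\,\Phi''|/|\Phi'|^2\,dy$, taken over a $y$-interval of length at most $D$, is then $\lesssim K/(\kappa D^3|x-z|)$, the same size as the boundary terms.

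Your smoothing fallback does not close this by itself. For smooth $\gamma$ the naive formula $\varphi_{yy}=\int_0^1(1-u)^2\gamma'''(y+u(x-y))\,du$ involves $\gamma'''$, so a bound uniform in the approximation still requires exactly these identities. Once this is supplied, your Schur computation goes through. It even gives $\|T_j\|\lesssim 2^{-3j/2}\sqrt{1+j}$, better than the paper's $2^{-3j/4}$.
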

\begin{proof}
Since \(\gamma'\) is strictly increasing, the set
\(J:=\{x:\gamma'(x)\in I\}\)
is an interval \(J=(\alpha,\beta)\). Then
\[
1_{\{\gamma'(x)\in I\}}1_{\{\gamma'(y)\in I\}}
=1_{\{x\in J\}}1_{\{y\in J\}}.
\]
As in our discussion above, let \[
G(x)=\int_{\alpha}^{x}g(\gamma'(t))\,dt.
\]
Then \[
(x-y)g(\varphi(x,y))-(G(x)-G(y))=\int_{y}^{x}g(\varphi(x,y))-g(\gamma'(t))dt.
\]

By symmetry, we may assume that \(x>y\) in the following calculations. Notice that \(\forall t\in (y,x)\), \(\exists u(t) \in I\), such that 
\[
g(\gamma'(t))=g(\varphi(x,y))+ (\gamma'(t)-\varphi(x,y))g'(\varphi(x,y))+\frac{(\gamma'(t)-\varphi(x,y))^2}{2} g''(u(t)),
\]
and 
\[
\int_{y}^{x} \gamma'(t)-\varphi(x,y)dt=\gamma(x)-\gamma(y)-(x-y)\varphi(x,y)=0.
\]

Then we have \[
\Big|(x-y)g(\varphi(x,y))-(G(x)-G(y))\Big|\lesssim \sup_{t\in I}|g''(t)|\int_{y}^{x} |\gamma'(t)-\varphi(x,y)|^2 dt.
\]

By the further estimate
\begin{align*}
    &\int_{y}^{x} |\gamma'(t)-\varphi(x,y)|^2 dt\\=&\int_{y}^{x}\left(\frac{|\gamma(x)-\gamma(y)-(x-y)\gamma'(t)|}{|x-y|}\right)^2 dt\\
    \le & \frac{1}{|x-y|}\sup_{t \in [y,x]} \left(\int_{y}^{x}|\gamma'(u)-\gamma'(t)|du\right)^2\\
    \lesssim &\frac{1}{|x-y|}\sup_{t \in [y,x]} \left(\int_{y}^{x}|u-t| du\right)^2
    \lesssim |x-y|^3,
\end{align*}
we obtain 
\[
\Big|(x-y)g(\varphi(x,y))-(G(x)-G(y))\Big|\lesssim K\kappa |x-y|^3.                                                                       
\]
    
Let \(r=\kappa^{-\frac{1}{3}}\), then for  \(\epsilon\le |x-y|\le r\), write

    \[
   \bar K_{g}(x,y)= 1_{\{\epsilon\le |x-y|\le r\}}1_{\{x\in J\}}1_{\{y\in J\}} \frac{e^{\mi(G(x)-G(y))}}{x-y}+K_{error}(x,y).
    \]
    
    The former term is easy to deal with by our modulation argument above, while for the error term, because of the fact that \(|e^{ix}-1|\le |x|\), we have
    \[
    |K_{error}(x,y)|\lesssim K \kappa|x-y|^2 1_{|x-y|\le r} .
    \] 
    
    By Schur's test, 
    \[\|T_{error}\|_{L^2\to L^2}\le \sup_{x}\int_{\{y: |x-y|\le r\}} K \kappa|x-y|^2dy \lesssim K \kappa r^3 \le K.\]

Now we are left to deal with the kernel
    \[
    1_{\{|x-y|>r\}}1_{\{x\in J\}}1_{\{y\in J \}} \frac{e^{\mi P(x,y)}}{x-y}.\]
    
    For the remaining part \(|x-y|> r\), we first consider \(x>y\), since the part where \(x<y\) can be treated in the same way. We decompose that into dyadic intervals \(\delta<x-y<2\delta\), where \(\delta=2^j r\) for some \(j\). Notice that here we only consider the one-sided interval, forcing \(x>y\), which will be used in later estimates.
    Then the corresponding operator is 
    \[
    \bar T_{g,\delta}f(x)= \int 1_{\{\delta<x-y<2\delta\}}1_{\{x\in J\}}1_{\{y\in J \}} \frac{e^{\mi P(x,y)}}{x-y} f(y) dy.
    \]
    
    Now we consider \(\bar T_{g,\delta}\bar T_{g,\delta}^{*}\), which is associated with kernel
    \[
    \widetilde{K}_\delta(x,z)=\int_{\{\delta<x-y<2\delta,\delta<z-y<2\delta\} }  \frac{e^{\mi (P(x,y)-P(z,y))}}{(x-y)(z-y)}dy.
    \]
    Here we suppress the factors \(1_{\{x\in J\}}1_{\{z\in J\}}\), and the integral in \(y\) is taken over
\[
J\cap\bigl(\max\{x,z\}-2\delta,\min\{x,z\}-\delta\bigr).
\]
If this interval is empty, then \(\widetilde K_\delta(x,z)=0\). Otherwise, denote it by \((A,B)\). Notice that \(|x-z|<\delta\) and \(B-A\le\delta\).
    
    Also denote\[
    \Phi(y)=P(x,y)-P(z,y)=(x-y)g(\varphi(x,y))-(z-y)g(\varphi(z,y))
    \]
    and \[
    b(y)=\frac{1}{(x-y)(z-y)}.
    \]

    Then \[
    \widetilde{K}_\delta(x,z)=\int_{A}^{B} e^{\mi \Phi(y)} b(y) dy.
    \]

    We want to use integration by parts here, then we first need to check \(\Phi'(y)\) doesn't vanish.
    \[
    \Phi'(y)=\int_{z}^{x}\partial_t\partial_yP(t,y)dt=\int_{z}^{x} (t-y)g''(\varphi(t,y))\varphi_{x}(t,y)\varphi_{y}(t,y)dt.
    \]
    
    Notice that
\[
\varphi_x(t,y)=\int_0^1 u\,\gamma''\bigl(y+u(t-y)\bigr)\,du.
\]
Since \(1\le\gamma''\le2\), we have \(\frac{1}{2}\le\varphi_x(t,y)\le1\),
and the same bounds hold for \(\varphi_y(t,y)\). 
Hence \(\forall t \in [z,x]\) (or \([x,z]\)), \((t-y)g''(\varphi(t,y))\varphi_{x}(t,y)\varphi_{y}(t,y)\) is positive or negative all the time, since we assume \(x>y\), \(z>y\) and \(g''\) doesn't change sign.

    Hence
    \[
    |\Phi'(y)|\gtrsim |x-z| \inf_{t} |(t-y)g''(\varphi(t,y))| \ge \kappa \delta |x-z|.
    \]

    Now we can do integration by parts as long as \(|x-z|\neq 0\),
    \[
     \widetilde{K}_\delta(x,z)=\frac{1}{\mi} \left[e^{\mi \Phi(y)}\frac{b(y)}{\Phi'(y)}\Big|_{A}^{B}-\int_{A}^{B}e^{\mi \Phi(y)}\frac{d}{dy}\left(\frac{b(y)}{\Phi'(y)}\right) dy\right].
    \]
    
    For the first term, 
    \[
    \left| \frac{b(y)}{\Phi'(y)}
    \right|\lesssim \frac{1}{\kappa \delta |x-z|}\cdot \frac{1}{\delta^2}=\frac{1}{\kappa \delta^3 |x-z|}.
    \]

    While for the second term,
    \[
    \left| \int_{A}^{B}e^{\mi \Phi(y)}\frac{d}{dy}\left(\frac{b(y)}{\Phi'(y)}\right) dy\right| \le \int_{A}^{B} \left|\frac{b'(y)}{\Phi'(y)}\right|dy+\int_{A}^{B} \left|\frac{b(y)\Phi''(y)}{(\Phi'(y))^2}\right|dy.
    \]

  For \(\forall y \in [A,B]\), \(b'(y)\) doesn't change sign, so 
    \[
    \int_{A}^{B} \left|\frac{b'(y)}{\Phi'(y)}\right|dy\lesssim \frac{1}{\kappa \delta|x-z|}\left|\int_{A}^{B} b'(y)dy\right|\lesssim \frac{1}{\kappa \delta^3 |x-z|},
    \]
    and by a similar argument as above,
\[
 |\Phi''(y)|= \left|\int_{z}^{x}\partial_t\partial_{yy}P(t,y)\,dt\right|
 \lesssim K\kappa |x-z|.
\]

In fact 
\begin{align*}
     \partial_t\partial_{yy}P(t,y)&=
(t-y)g'''(\varphi(t,y))
        \varphi_x(t,y)\varphi_y(t,y)^2\\
        &-g''(\varphi(t,y))
        \varphi_x(t,y)\varphi_y(t,y)\\
 &+(t-y)g''(\varphi(t,y))
        \bigl(\varphi_{xy}(t,y)\varphi_y(t,y)
        +\varphi_x(t,y)\varphi_{yy}(t,y)\bigr).
\end{align*}
And we claim that \(| \partial_t\partial_{yy}P(t,y)|\lesssim K\kappa\). Since \(t,y\in J\), \(\gamma'(J)\subset I\subset[-2,2]\), and \(\gamma''\ge1\), we have \(|t-y|\le|\gamma'(t)-\gamma'(y)|\lesssim 1\).
Moreover, we have already proved \(|\varphi_x|\lesssim 1\) and \(|\varphi_y|\lesssim 1\), and now we are left to show \(|(t-y)\varphi_{xy}(t,y)|\lesssim 1\) and \(|(t-y)\varphi_{yy}(t,y)|\lesssim 1\). This is obtained by considering \(\partial_{xy}((x-y)\varphi(x,y))\) and  \(\partial_{yy}((x-y)\varphi(x,y))\).

    Hence 
    \[
    \int_{A}^{B} \left|\frac{b(y)\Phi''(y)}{(\Phi'(y))^2}\right|dy\le \delta \sup_{y\in [A,B]}\left|\frac{b(y)\Phi''(y)}{(\Phi'(y))^2}\right|\lesssim \frac{K}{\kappa \delta^3 |x-z|}.
    \]

    Putting the estimates together, we have when \(|x-z| \neq 0\), 
    \[
     |\widetilde{K}_\delta(x,z)|\le C_K\cdot  \frac{1}{\kappa \delta^3 |x-z|}.
    \]

    And we also have the trivial estimate\[
    |\widetilde{K}_\delta(x,z)|\le \delta \sup_{y\in [A,B]} |b(y)|\le \frac{1}{\delta}.
    \]

    By Schur's test,
  \begin{align*}
      \|\bar T_{g,\delta}\|_{L^2 \to L^2}^2 &\le \sup_{x}\int_{\{z: |x-z|\le \delta\}}  |\widetilde{K}_\delta(x,z)|dz \\
    &\lesssim_K \sup_{x}\int_{\{z:|x-z|\le (\kappa \delta^2)^{-1}\}} \frac{1}{\delta} dz+ \int_{\{z: (\kappa \delta^2)^{-1}\le |x-z|\le \delta \}} \frac{1}{\kappa \delta^3 |x-z|} dz\\
    & \lesssim_K \frac{1+\ln(\kappa \delta^3)}{\kappa\delta^3}\lesssim (\kappa\delta^3)^{-\frac{1}{2}}.
  \end{align*}

  Hence \[
   \|\bar T_{g,\delta}\|_{L^2 \to L^2}\lesssim_K (\kappa\delta^3)^{-\frac{1}{4}}.
  \]

  Take the sum over \(\delta\), and notice that \(\kappa r^3 = 1\), we get \(\|\bar T_g\|_{L^2\to L^2}\lesssim_K 1\), as desired.

\end{proof}
\subsection{General Case}
In Lemma \ref{ricci-stein}, we mainly dealt with the case \(|g''|\) has a uniform lower bound. For the general case, we want to rule out the points where \(g''=0\), and break the else parts into disjoint intervals. This is harmless as long as the number of intervals \(\lesssim_K 1\).

\begin{lemma}
\label{partition}
Suppose that \(g\) is a function defined on an interval \(I\), and $I$ is the disjoint union, up to endpoints, of
$\lesssim_K 1$ intervals $I_j$, and that
\[
\|T_{g,I_j}\|_{L^2\to L^2}\lesssim_K 1
\]
uniformly in $j$. Then
\[
\|T_{g,I}\|_{L^2\to L^2}\lesssim_K 1,
\]
where \(T_{g,I}\) is associated with 
\[
K_{g,I}(x,y):=1_{\{|x-y|>\epsilon\}}1_{\{\varphi(x,y)\in I\}} \frac{e^{\mi (x-y)g(\varphi(x,y))}}{x-y}.
\]
We use this notation when we need to specify the interval containing \(\varphi(x,y)\).
\end{lemma}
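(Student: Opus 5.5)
The proof is essentially linearity. The indicator $1_{\{\varphi(x,y)\in I\}}$ splits over the pieces: if $I=\bigcup_{j=1}^N I_j$ with $N\lesssim_K 1$ and the $I_j$ pairwise disjoint up to endpoints, then
\[
1_{\{\varphi(x,y)\in I\}}=\sum_{j=1}^N 1_{\{\varphi(x,y)\in I_j\}}
\]
holds except on the set $\{(x,y):\varphi(x,y)\in F\}$, where $F$ is the finite set of shared endpoints. Once this set is shown to be negligible, we get $K_{g,I}=\sum_j K_{g,I_j}$ almost everywhere on $[-1,1]^2$. Hence $T_{g,I}=\sum_j T_{g,I_j}$ as operators, and the triangle inequality gives
\[
\|T_{g,I}\|_{L^2\to L^2}\le \sum_{j=1}^N\|T_{g,I_j}\|_{L^2\to L^2}\lesssim_K N\lesssim_K 1.
\]

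The one point that needs an argument is that $\{\varphi=c\}$ has two-dimensional measure zero for each $c\in F$. Fix $y$ and regard $\varphi(\cdot,y)$ as a function of $x$. From the formula
\[
\varphi_x(x,y)=\int_0^1 u\,\gamma''\bigl(y+u(x-y)\bigr)\,du\ge \tfrac12,
\]
this function is strictly increasing, so it takes the value $c$ at most once. The level set therefore meets each horizontal line in at most one point, and Fubini shows it is null. Endpoint conventions for the $I_j$ (open, closed or half-open) thus play no role.

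If one prefers to avoid the measure-zero argument, there is a second route. Each boundary term
\[
1_{\{|x-y|>\epsilon\}}1_{\{\varphi(x,y)=c\}}\frac{e^{\mi (x-y)g(\varphi(x,y))}}{x-y}
\]
can be bounded directly, in the same way as in Proposition~\ref{kernel-equivalence} via Lemma~\ref{hilbert}. Either way, no step is a genuine obstacle. The only care needed is to make sure the bound on the number of pieces $N$ enters linearly and depends only on $K$.
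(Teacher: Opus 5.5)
Your proposal is correct and matches the paper's proof, which consists of writing \(T_{g,I}=\sum_j T_{g,I_j}\) and applying the triangle inequality over the \(\lesssim_K 1\) pieces. Your one addition is the check that the shared-endpoint level sets \(\{\varphi=c\}\) have measure zero, via \(\varphi_x\ge\frac12\) from \(\gamma''\ge1\); the paper leaves this detail implicit.
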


\begin{proof}
Notice that 
\[
T_{g,I}=\sum_{j}T_{g,I_j}.
\]
The conclusion follows from the triangle inequality, since the number of intervals is \(\lesssim_K 1\).

\end{proof}

After this partition, we can assume that on each interval \(I\), either \(g''\) doesn't vanish, or \(g''\equiv 0\). The latter is easy to deal with.
\begin{lemma}
    If \(g''\equiv0\) on \(I\), equivalently \(g(t)=at+b\), then \[\|T_{g}\|_{L^2\to L^2}\lesssim_ 1 .
    \]
\end{lemma}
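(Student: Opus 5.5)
If $g(t)=at+b$, the phase separates exactly, since $(x-y)\varphi(x,y)=\gamma(x)-\gamma(y)$. Indeed,
\[
(x-y)g(\varphi(x,y))=a\bigl(\gamma(x)-\gamma(y)\bigr)+b(x-y)=F(x)-F(y),\qquad F(t):=a\gamma(t)+bt .
\]
This is also what the preceding computation predicts: $P_{xy}$ involves $g$ only through $g''$, which vanishes here. I therefore plan to compare $T_g$ with $\bar T_g$ via Proposition~\ref{kernel-equivalence}, and then recognize $\bar T_g$ as a modulated, truncated and localized Hilbert transform.

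Concretely, I work with $\bar T_g$. As in the proof of Lemma~\ref{ricci-stein}, the set $J=\{x:\gamma'(x)\in I\}$ is an interval, because $\gamma'$ is strictly increasing. Hence
\[
\bar T_g f(x)=1_J(x)\,e^{\mi F(x)}\int_{|x-y|>\epsilon}\frac{e^{-\mi F(y)}1_J(y)f(y)}{x-y}\,dy .
\]
The multipliers $e^{\pm\mi F}$ and $1_J$ have modulus at most one, so
\[
\|\bar T_g f\|_{L^2}\le \|H_\epsilon (e^{-\mi F}1_Jf)\|_{L^2}\le C\|f\|_{L^2},
\]
where $H_\epsilon$ is the $\epsilon$-truncated Hilbert transform. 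Its $L^2$ bound is uniform in $\epsilon$: the symbol of $\mathrm{p.v.}\,1_{|t|>\epsilon}/t$ is bounded by an absolute constant.

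Finally, Proposition~\ref{kernel-equivalence} transfers the bound to $T_g$. Its proof shows $|K_g-\bar K_g|$ is dominated by at most two kernels of the form $1_{\{x<c<y\}}/|x-y|$ (the set $\mathcal C$ has at most two points, since $\gamma'$ is injective). Each of these is bounded on $L^2$ by Lemma~\ref{hilbert}, so $\|T_g\|\lesssim 1$ with an absolute constant, independent of $a,b,\epsilon,I$.

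There is no real obstacle. The only point to check is that the constants in the truncated Hilbert transform bound and in the comparison step are absolute, in particular independent of the truncation and of the interval $I$. Both are standard.
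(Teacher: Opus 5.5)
Your proof is correct and matches the paper's argument: the phase separates exactly as $a(\gamma(x)-\gamma(y))+b(x-y)$, which makes $\bar T_g$ a localized, modulated, truncated Hilbert transform, and Proposition~\ref{kernel-equivalence} then transfers the bound to $T_g$. You also spell out that the constant is independent of $\epsilon$, $a$, $b$, and $I$, which the paper leaves implicit.
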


\begin{proof}
 We only need to prove the \(L^2\) boundedness of \(\bar T_g\), which has the kernel
\begin{align*}
    \bar K_g(x,y)&=1_{\{|x-y|>\epsilon\}}1_{\{\gamma'(x)\in I \}}1_{\{\gamma'(y)\in I \}}
\frac{e^{\mi(x-y)(a\varphi(x,y)+b)}}{x-y}\\
& =1_{\{|x-y|>\epsilon\}}1_{\{\gamma'(x)\in I \}}1_{\{\gamma'(y)\in I \}}
\frac{e^{\mi(a(\gamma(x)-\gamma(y))+b(x-y))}}{x-y}.
\end{align*}

Then the \(L^2 \) boundedness follows from modulated Hilbert transform.

\end{proof}

We now focus on the case where \(g''\) does not vanish on \(I\), but may vanish at an endpoint, so that \(|g''|\) may not have the uniform positive lower bound required in Lemma \ref{ricci-stein}. The following lemma gives an estimate adapted to this situation when \(I=(0,L)\), using a dyadic decomposition near \(0\).

\begin{lemma}
\label{one-endpoint}
    Let \(I=(0,L)\), \(0<L\le 2\). And let \(g\in C^3(I)\), \(g''\neq 0\) on \(I\). If
    \[
    \left|t\frac{g'''(t)}{g''(t)}\right| \le K,
    \]
    then 
    \[
    \|\bar T_{g,I}\|_{L^2\to L^2}\lesssim_K 1\mathrm{~and ~equivalently~}\| T_{g,I}\|_{L^2\to L^2}\lesssim_K 1.
    \]
     
\end{lemma}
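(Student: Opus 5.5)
The plan is a dyadic decomposition of $I=(0,L)$ near $0$, applying a rescaled version of Lemma \ref{ricci-stein} on each dyadic piece, and then summing the pieces without losing a logarithm.

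\textbf{Step 1: dyadic pieces satisfy Lemma \ref{ricci-stein}.} Set $I_n=(2^{-n-1}L,2^{-n}L)$ for $n\ge0$. The hypothesis $|t g'''/g''|\le K$ integrates to $|\log|g''(t)|-\log|g''(s)||\le K\log 2$ for $t,s\in I_n$. Hence on $I_n$ we have $|g''|\sim_K\kappa_n:=|g''(2^{-n}L)|$. We also get $|g'''|\le K\kappa_n 2^{n+2}/L$, which is not the form required by Lemma \ref{ricci-stein}. Inspecting that proof, the bound on $g'''$ enters only through $\Phi''$. There $(t-y)g'''$ appears with $|t-y|\lesssim|\gamma'(t)-\gamma'(y)|\lesssim|I_n|\sim 2^{-n}L$, because on $\bar T_{g,I_n}$ both $\gamma'(x),\gamma'(y)$ lie in $I_n$. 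So $|(t-y)g'''(\varphi)|\lesssim_K\kappa_n$, and the argument goes through verbatim. This gives $\|\bar T_{g,I_n}\|\lesssim_K1$ uniformly in $n$, with $J_n=\{\gamma'\in I_n\}$ an interval of length $\sim 2^{-n}L$.

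\textbf{Step 2: sum without losing $\log$.} The triangle inequality over $n$ would lose a logarithm, so the summation has to exploit the diagonal structure.

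First split the kernel of $\bar T_{g,I}$ into a diagonal block and an off-diagonal block:
\[
\sum_n 1_{J_n}(x)1_{J_n}(y)\,(\cdots)\quad\text{and}\quad\sum_{n\neq n'}1_{J_n}(x)1_{J_{n'}}(y)\,(\cdots).
\]

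The diagonal block is $\bigoplus_n \bar T_{g,I_n}$, an orthogonal direct sum, so its norm is $\sup_n\|\bar T_{g,I_n}\|\lesssim_K1$.

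For the off-diagonal block, take $|n-n'|\ge2$. Then $|x-y|\gtrsim\operatorname{dist}(J_n,J_{n'})\sim\max(|J_n|,|J_{n'}|)$. Using $\gamma''\sim1$, the distance $|x-x_0|$ (where $\gamma'(x_0)=0$) is comparable to $\gamma'(x)\sim 2^{-n}L$. Hence on this block the kernel is bounded by $1/(|x-x_0|+|y-x_0|)$, up to restricting to sides. This Hilbert-type kernel is bounded by Lemma \ref{hilbert} after reflecting about $x_0$, just as in Proposition \ref{kernel-equivalence}.

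The neighbor terms $|n-n'|=1$ remain. Each is equivalent, via Proposition \ref{kernel-equivalence}, to the difference between $\bar T_{g,I_n\cup I_{n+1}}$ and its two diagonal pieces. Here $I_n\cup I_{n+1}$ is again a dyadic-comparable interval, so Step 1 applies to it. Grouping $n$ by parity, these neighbor terms become two further orthogonal direct sums, each uniformly bounded.

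\textbf{Step 3: conclusion and main obstacle.} Adding the three contributions gives $\|\bar T_{g,I}\|\lesssim_K1$. The version for $T_{g,I}$ then follows from Proposition \ref{kernel-equivalence}, since $\varphi$ lies between $\gamma'(x)$ and $\gamma'(y)$ and the discrepancy is a sum of Hilbert-type kernels.

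The main obstacle is Step 1: re-verifying the $\Phi''$ estimate in the proof of Lemma \ref{ricci-stein} under the weaker, scale-adapted bound on $g'''$. One must check that every appearance of $g'''$ is accompanied by a factor $(t-y)$ controlled by $|I_n|$. If some term lacks this factor, the fallback is to rescale $x-x_0\mapsto 2^{n}(x-x_0)/L$, which turns $I_n$ into an interval of unit size. This costs care because $\gamma$ is only $C^2$ and the rescaled curve has $\gamma''$ comparable to $1$ only after renormalization.
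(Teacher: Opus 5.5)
Your proposal is correct and follows the paper's proof: dyadic pieces of $(0,L)$, a uniform bound on each piece via Lemma \ref{ricci-stein}, and a diagonal/adjacent/far splitting with Lemma \ref{hilbert} for the far blocks. Your parity grouping of the adjacent blocks is equivalent to the paper's use of $E_l=I_{l-1}\cup I_l\cup I_{l+1}$ with bounded overlap, and your far-block bound is the paper's, which substitutes $u=\gamma'(x)$ directly and so does not need a point $x_0$ with $\gamma'(x_0)=0$ to lie in $[-1,1]$.

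The only difference is in Step 1. The paper uses exactly your fallback: it rescales via $G_l(t)=2^lg(2^lt)$ and $\Gamma(x)=2^{-2l}\gamma(2^lx)$ and then applies Lemma \ref{ricci-stein} as stated. Your direct route is also valid. In that proof $g'''$ enters only through $(t-y)\,g'''(\varphi(t,y))$, where $|t-y|\le|J_n|\le|I_n|$, so the lemma holds under the scale-invariant hypothesis $|I|\,|g'''|\lesssim K\kappa$, and this avoids the change of variables.
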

\begin{proof}
    Since $g''$ does not vanish,
\[
\left|\frac{d}{dt}\log|g''(t)|\right|
=
\left|\frac{g'''(t)}{g''(t)}\right|
\le \frac Kt.
\]

Thus, for $t_1<t_2$ in $I$,
\[
\left|\log\frac{|g''(t_2)|}{|g''(t_1)|}\right|
\le K\int_{t_1}^{t_2}\frac{dt}{t}
=K\log\frac{t_2}{t_1},
\]
and hence
\[
\left(\frac{t_1}{t_2}\right)^K
\le
\frac{|g''(t_2)|}{|g''(t_1)|}
\le
\left(\frac{t_2}{t_1}\right)^K.
\]

Let
\[
I_l=(2^l,2^{l+1})\cap I, \;\; g_l(t)= g\big|_{I_l}(t).
\]

Just to mention, the exponent \(l\) will mostly be negative. By the estimates above, we have
\[
\sup |g_l''|\le 2^K \inf |g_l''|.
\]

Now we want to apply Lemma \ref{ricci-stein}, and we already have the desired estimate for \(g''\) as long as we substitute \(K\) by \(2^K\), and just choose \(\kappa=\inf |g_l''|\). However we know nothing of \(|g_l'''|\) other than \[
\left|g_l'''(t)\right|\le \left|\frac{Kg_l''(t)}{t}\right|.
\]

The constant will lose control if \(t\) is close to zero. To deal with this, we want to rescale the interval \(I_l\) to \([1,2]\). To be precise, let \(J_l=2^{-l} I_l\subset (1,2)\), \(\forall t \in J_l\)
\[
G_l(t)=2^{l}g(2^lt).
\]

Then
\[
G_l''(t)=2^{3l}g''(2^lt),
\;\;
G_l'''(t)=2^{4l}g'''(2^lt).
\]

Taking
\[
\kappa_l=2^{3l}\inf_{I_l}|g''|>0,
\]

we obtain
\[
\kappa_l\le |G_l''(t)|\le 2^K\kappa_l.
\]

Furthermore, since $t\in (1,2)$,
\[
|G_l'''(t)|
\le \frac Kt|G_l''(t)|
\le K2^K\kappa_l.
\]

Thus Lemma \ref{ricci-stein} gives
\[
\| T_{G_l,J_l}F\|_{L^2}
\lesssim_K \|F\|_{L^2}
\]
as long as the corresponding curve \(\Gamma\) satisfies \(1\le \Gamma''\le 2\).

Notice that if  we assume \(u=2^l x\), \(v=2^l y\), \(F(y):=f(2^l y)\), 
\[\Gamma(x):=\frac{\gamma(2^lx)}{2^{2l}}\mathrm{~and~}\Phi(x,y) :=\frac{\Gamma(x)-\Gamma(y)}{x-y},
\]
then it's easy to check\[
\Gamma''(x)=\gamma''(2^l x)\in [1,2]
\]
and 
\begin{align*}
T_{g_l,I_l}
f(u)&=\int 1_{\{\varphi(u,v)\in I_l\}}1_{|u-v|>\epsilon} \frac{e^{\mi (u-v)g(\varphi(u,v))}}{u-v}f(v) dv\\
&=\int  1_{\{\Phi(x,y)\in J_l\}}
 1_{\{|x-y|>\frac{\epsilon}{2^l}\}}
\frac{e^{\mi (x-y)G_l(\Phi(x,y))}}{x-y}
f(2^ly)dy\\
&=T_{G_l,J_l}F(x).
\end{align*}
Hence 
\[
\|T_{g_l,I_l}f\|_{L^2}=\|T_{G_l,J_l}F(2^{-l} \cdot)\|_{L^2}=2^{\frac{l}{2}}\|T_{G_l,J_l}F\|_{L^2}\lesssim_K 2^{\frac{l}{2}}\|F\|_{L^2}=\|f\|_{L^2}.
\]
Here is a remark that the form of \(G_l\), \(\Gamma\) and \(\Phi\) comes from the change of variables above. It is natural to consider \(u=2^l x\) and \(v=2^l y\), then we are left to make the phase aligns with the form in Lemma \ref{ricci-stein}.

We have now obtained uniform bounds for all the truncated operators \(T_{g_l,I_l}\), and we need to put them together. Since there are infinitely many intervals, we cannot apply Lemma \ref{partition} directly, but we can use the dyadic structure to sum these pieces. We work with \(\bar T_{g,I}\), so that we can localize \(x\) and \(y\) separately.

Set \(\Omega=\{x:\gamma'(x)\in I\}\) and \(P_lf(x)=1_{\{\gamma'(x)\in I_l\}}f(x)\).
Since \(\bar T_{g,I}\) is supported on \(\Omega\times\Omega\), we have
\[
\bar T_{g,I}
=\sum_{l,m}P_l\bar T_{g,I}P_m
=T_{\mathrm{near}}+T_{\mathrm{far}}
\]
where
\[
T_{\mathrm{near}}
=\sum_{|l-m|\le1}P_l\bar T_{g,I}P_m,
\]
and
\[
T_{\mathrm{far}}
=\sum_{|l-m|\ge2}P_l\bar T_{g,I}P_m.
\]

First consider the far terms. If \(\gamma'(x)\in I_l\), \(\gamma'(y)\in I_m\), and \(|l-m|\ge2\), then the dyadic separation and the bounds \(1\le\gamma''\le2\) give
\[
|x-y|
\gtrsim|\gamma'(x)-\gamma'(y)|
\gtrsim \max \{\gamma'(x),\gamma'(y)\}\gtrsim \gamma'(x)+\gamma'(y).
\]
Taking absolute values and summing over the disjoint regions, we obtain
\[
|T_{\mathrm{far}}f(x)|
\lesssim1_{\Omega}(x)\int_{\Omega}
\frac{|f(y)|}{\gamma'(x)+\gamma'(y)}\,dy.
\]

Now change variables \(u=\gamma'(x)\) and \(v=\gamma'(y)\), and define
\[
F(v)=f\bigl((\gamma')^{-1}(v)\bigr).
\]

 Since both Jacobians are comparable to \(1\), Lemma \ref{hilbert} gives
\[
\begin{aligned}
\|T_{\mathrm{far}}f\|_{L_x^2}
&\lesssim
\left\|\int_0^\infty\frac{|F(v)|}{u+v}\,dv
\right\|_{L_u^2}\\
&\lesssim\|F\|_{L^2}
\lesssim\|f\|_{L^2}
\end{aligned}.
\]
For the near terms, let
\[
E_l=I_{l-1}\cup I_l\cup I_{l+1},
\;\;
Q_l=P_{l-1}+P_l+P_{l+1}.
\]
The same argument used for $I_l$ gives
\[
\|\bar T_{g,E_l}\|_{L^2\to L^2}\lesssim_K 1.
\]

Notice that
\[
T_{\mathrm{near}}f
=\sum_lP_l\bar T_{g,E_l}f.
\]
Moreover, the local estimate gives
\[
\|P_l\bar T_{g,E_l}f\|_2
\lesssim_K\|Q_lf\|_2.
\]
Since the output supports are disjoint,
\[
\begin{aligned}
\|T_{\mathrm{near}}f\|_2^2
&=\sum_l\|P_l\bar T_{g,E_l}f\|_2^2\\
&\lesssim_K\sum_l\|Q_lf\|_2^2
\lesssim\|f\|_2^2
\end{aligned},
\]
where the last inequality follows from the bounded overlap of the sets \(E_l\).
Combining the near and far estimates we have
\[
\|\bar T_{g,I}f\|_2\lesssim_K\|f\|_2.
\]
as desired.

\end{proof}

Now we should push forward a little more to deal with the case where \(g''=0\) on both endpoints. This follows by splitting the interval into two halves and applying the preceding lemma after translation or reflection.

\begin{theorem}
\label{two endpoints}
Let $I=(a,b)\subset [-2,2]$, and let $g\in C^3(I)$
, $g''$ does not vanish on $I$. Denote \(d_I(t):=\min(t-a,b-t)\). Assume that
\[
d_I(t)\left|\frac{g'''(t)}{g''(t)}\right|\le K.\]
Then
\[
\|T_{g,I}\|_{L^2\to L^2}\lesssim_K 1.
\]
\end{theorem}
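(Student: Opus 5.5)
The plan is to split $I$ at its midpoint $c=(a+b)/2$ into $I_1=(a,c)$ and $I_2=(c,b)$. By Lemma \ref{partition}, it suffices to bound $T_{g,I_1}$ and $T_{g,I_2}$ separately. On $I_1$ we have $d_I(t)=t-a$, so the hypothesis reads $(t-a)|g'''(t)/g''(t)|\le K$. On $I_2$ we have $d_I(t)=b-t$, and the hypothesis reads $(b-t)|g'''(t)/g''(t)|\le K$. Each half therefore satisfies a one-endpoint condition at exactly one endpoint, and the aim is to reduce each to Lemma \ref{one-endpoint}.

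For $I_1$ I will translate. Set $\tilde g(t)=g(t+a)$ on $(0,L)$ with $L=c-a\le 2$; then $t|\tilde g'''/\tilde g''|\le K$ and $\tilde g''\ne0$. The phase $(x-y)g(\varphi(x,y))$ must be rewritten with $\tilde g$ evaluated at a shifted slope. Replace $\gamma$ by $\tilde\gamma(x)=\gamma(x)-ax$. Then $\tilde\varphi=\varphi-a$, and $1\le\tilde\gamma''\le2$ still holds, so that
\[
(x-y)g(\varphi(x,y))=(x-y)\tilde g(\tilde\varphi(x,y)),\qquad 1_{\{\varphi\in I_1\}}=1_{\{\tilde\varphi\in(0,L)\}}.
\]
Thus $T_{g,I_1}$ is literally the operator of Lemma \ref{one-endpoint} for the curve $\tilde\gamma$, and that lemma applies. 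The only thing to check is that the proofs of Lemmas \ref{ricci-stein} and \ref{one-endpoint} used only $1\le\gamma''\le2$ and the slopes lying in a bounded interval. After the shift, $\tilde\gamma'$ takes values in a range of length $O(1)$ containing $(0,L)$, so the bounds $|t-y|\lesssim|\tilde\gamma'(t)-\tilde\gamma'(y)|\lesssim1$ still hold. If one insists on $I\subset[-2,2]$ literally, I will note that only $|I|\lesssim1$ entered.

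For $I_2$ I will reflect. Set $\hat g(t)=g(b-t)$ on $(0,b-c)$; then $t|\hat g'''/\hat g''|\le K$. To realize this I use the reflected variables $x\mapsto -x$, $y\mapsto-y$ and the curve $\hat\gamma(x)=\gamma(-x)+bx$. Its slope is $\hat\gamma'(x)=-\gamma'(-x)+b$, and one checks $\hat\varphi(x,y)=b-\varphi(-x,-y)$ with $\hat\gamma''(x)=\gamma''(-x)\in[1,2]$. Then
\[
(x-y)g(\varphi(x,y))=-(x'-y')\hat g(\hat\varphi(x',y')),\qquad x'=-x,\ y'=-y.
\]
The kernel $1/(x-y)$ becomes $-1/(x'-y')$, so the two signs cancel, and the reflection $f\mapsto f(-\cdot)$ is an isometry on $L^2([-1,1])$. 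The phase keeps the form $(x'-y')\hat g(\hat\varphi)$, and Lemma \ref{one-endpoint} gives the bound.

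The main obstacle I expect is purely bookkeeping: making sure the sign conventions under reflection really reproduce a phase of the exact form $(x-y)\hat g(\hat\varphi(x,y))$, rather than its complex conjugate. If a conjugate appears, it is harmless, because conjugation corresponds to $\hat g\mapsto-\hat g$, which preserves the hypothesis $t|\hat g'''/\hat g''|\le K$. The second point to check is that the earlier lemmas tolerate slope intervals not contained in $[-2,2]$ after the affine change of $\gamma$. Combining the two halves via Lemma \ref{partition} finishes the proof.
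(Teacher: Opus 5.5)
Your proposal is correct and is essentially the paper's proof: split at the midpoint, invoke Lemma~\ref{partition}, treat $(a,c)$ by the shift $\gamma(x)-ax$ so that Lemma~\ref{one-endpoint} applies verbatim, and treat $(c,b)$ by a reflection, which the paper only calls ``similar.'' One bookkeeping fix: under your reflection the two signs do not cancel---the kernel becomes $-1$ times the kernel for $-\hat g$ and the curve $\hat\gamma$---but, as you anticipated, $-\hat g$ satisfies the same hypothesis, so the conclusion stands.
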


\begin{proof}
Let $c=(a+b)/2$, and \(I_-=(a,c)\), \( I_+=(c,b)\).
By Lemma \ref{partition}, it suffices to prove the desired
estimate for $T_{g,I_-}$ and $T_{g,I_+}$.

On $I_-$, we have $d_I(t)=t-a$. Define \(u=t-a\), and \(g_-(u)=g(a+u)\) for \(0<u<c-a\), then $g_-''$ does not vanish, and
\[
u\left|\frac{g_-'''(u)}{g_-''(u)}\right|
=
(t-a)\left|\frac{g'''(t)}{g''(t)}\right|
\le K.
\]

Denote \(J_{-}=(0,c-a)\), Lemma \ref{one-endpoint} gives
\[
\|T_{g_-,J_-}\|_{L^2\to L^2}\lesssim_K 1.
\]

Let \(\Gamma(x)=\gamma(x)-ax\) and \(\Phi(x,y)=\frac{\Gamma(x)-\Gamma(y)}{x-y}\), then 
\begin{align*}
    T_{g,I_-}f(x)&=\int 1_{\{\varphi(x,y) \in I_-\}} 1_{\{|x-y|>\epsilon\}} \frac{e^{\mi (x-y)g(\varphi(x,y))}}{x-y}f(y) dy\\
    &=\int 1_{\{\Phi(x,y) \in J_-\}}   1_{\{|x-y|>\epsilon\}} \frac{e^{\mi (x-y)g_-(\Phi(x,y))}}{x-y}f(y) dy\\
    &= T_{g_-,J_-}f(x).
\end{align*}

Hence

\[
\|T_{g,I_-}\|_{L^2\to L^2}=\|T_{g_-,J_-}\|_{L^2\to L^2}\lesssim_K 1.
\]

The similar argument for $T_{g,I_+}$.

\end{proof}

Now we want to apply Theorem \ref{two endpoints} to our setting,
where $g$ is semialgebraic with complexity $k$.
First, we can divide its domain into $\lesssim_k 1$ intervals
on which $g\in C^3$ and $g''$ is either identically zero or
never zero. Moreover, by Theorem \ref{algebraic}, we can assume that \(g\) is algebraic. The case $g''\equiv0$ has already been treated.
Thus, by Lemma \ref{partition}, we only need to show that
each remaining interval can be divided into $\lesssim_k 1$
subintervals $J$ on which
\[
d_J(t)\left|\frac{g'''(t)}{g''(t)}\right|\le C_k.
\]

\bigskip
\section{A partition for algebraic functions}
\label{complex}
By the reduction above, we only need to prove the following:
if $g$ satisfies a polynomial equation of bounded degree and
$g''$ never vanishes, then its domain can be divided into
a bounded number of intervals $J$ on which
\[
d_J(t)\left|\frac{g'''(t)}{g''(t)}\right|
\]
is uniformly bounded. Both bounds should depend only on
the degree of the polynomial.

To prove this, we will use some facts about holomorphic functions.

\begin{lemma}[Bloch's Theorem]
\label{bloch}
Let $H$ be holomorphic on the unit disk $\mathbb D$, with
$H'(0)\ne0$. Then $H(\mathbb D)$ contains an open disk
of radius $|H'(0)|/72$.
\end{lemma}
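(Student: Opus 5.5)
Bloch's theorem is classical, and I would prove it by the standard normal-families / Landau argument, aiming only for the explicit constant $1/72$ rather than the sharp one.

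First I normalize. Replace $H$ by $H - H(0)$ and set $a=|H'(0)|$. If $H$ is not holomorphic on a neighborhood of $\overline{\mathbb D}$, I pass to dilates $H_r(z)=H(rz)$ with $r<1$; the images increase with $r$ and $|H_r'(0)|=r|H'(0)|$. A disk of radius $r a/72$ in $H_r(\mathbb D)$ for every $r<1$ gives the claim after a limiting argument, with an arbitrarily small loss. So I may assume $H$ is holomorphic up to the boundary.

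Next I locate a good center. I consider $\mu(r)=(1-r)\max_{|z|=r}|H'(z)|$ on $[0,1]$. It is continuous, satisfies $\mu(0)=a$ and $\mu(1)=0$, and so attains a maximum at some $r_0<1$. I pick $z_0$ with $|z_0|=r_0$ and $|H'(z_0)|=\max_{|z|=r_0}|H'|$, and set $\rho=(1-r_0)/2$. On the disk $D(z_0,\rho)$ every point has modulus at most $r_0+\rho$, so maximality of $\mu$ gives
\[
|H'|\le \frac{\mu(r_0)}{1-r_0-\rho} = \frac{2\mu(r_0)}{1-r_0}=2|H'(z_0)|.
\]
Set $M=|H'(z_0)|$. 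Then $H'$ is bounded by $2M$ on $D(z_0,\rho)$, and $\rho M=\mu(r_0)/2\ge a/2$.

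The key step is a quantitative local-injectivity, or open-mapping, estimate. I rescale to the unit disk by $F(w)=\bigl(H(z_0+\rho w)-H(z_0)\bigr)/\rho$. Then $F(0)=0$, $|F'(0)|=M$, and $|F'|\le 2M$ on $\mathbb D$. I would prove the following: if $F(0)=0$, $|F'(0)|=\beta$ and $|F'|\le B$ on $\mathbb D$, then $F(\mathbb D)$ contains the disk $D(0,\beta^2/(cB))$ for an absolute constant $c$. The argument has two parts. Write $F(w)=F'(0)w+E(w)$. By Schwarz's lemma applied to $F'-F'(0)$ (which has sup at most $3M$ in our case), $|F'(w)-F'(0)|\le 3M|w|$, so $|E(w)|\le \tfrac32 M|w|^2$. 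On the circle $|w|=s$ with $s=1/6$ this yields $|F(w)|\ge Ms-\tfrac32 Ms^2 = M/8$. Rouché's theorem, or the argument principle, then shows that every value of modulus less than $M/8$ is attained inside $|w|<s$. So $F(\mathbb D)\supset D(0,M/8)$, and scaling back, $H(\mathbb D)\supset D\bigl(H(z_0),\rho M/8\bigr)$, which contains a disk of radius at least $a/16$. This is better than $a/72$.

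The main obstacle I anticipate is bookkeeping of constants in the Schwarz-lemma step, together with the boundary-regularity reduction. Neither is deep. If my estimate $|E|\le\tfrac32 M|w|^2$ turns out to be off, the alternative is to bound $|F''|$ by Cauchy estimates on a smaller disk. That worsens the constant, but the slack between $1/16$ and $1/72$ should absorb it.
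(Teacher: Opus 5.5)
The paper gives no proof of this lemma. It quotes Bloch's theorem as a classical fact; $1/72$ is the constant from the standard textbook proofs (e.g.\ Conway's), which also produce a subdisk on which $H$ is univalent. Your argument is a correct, self-contained proof.

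I checked the constants.
\begin{itemize}
\item With $\mu(r)=(1-r)\max_{|z|=r}|H'(z)|$ maximized at some $r_0<1$ and $\rho=(1-r_0)/2$, you do get $|H'|\le 2M$ on $D(z_0,\rho)\subset\mathbb D$, and $\rho M=\mu(r_0)/2\ge |H'(0)|/2$.
\item Schwarz's lemma applied to $F'-F'(0)$, which vanishes at $0$ and is bounded by $3M$, gives $|F(w)-F'(0)w|\le \tfrac32 M|w|^2$. So your estimate on $E$ is right.
\item Hence $|F|\ge M/6-M/24=M/8$ on $|w|=1/6$. Rouch\'e then gives $F(\mathbb D)\supset D(0,M/8)$, and so $H(\mathbb D)$ contains a disk of radius $|H'(0)|/16$.
\end{itemize}

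The real difference from the textbook route is that you only require the image to contain a disk, not univalence on a subdisk. The univalence step is what costs the constant there. Only the image statement is used in Lemma~\ref{finite-valency}, so your version suffices for the paper. It would even improve $72\pi D$ to $16\pi D$ in Lemma~\ref{algebraic-log-derivative}, although only boundedness matters there.

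One simplification: the reduction to $H$ holomorphic near $\overline{\mathbb D}$ needs no limiting argument. Such an argument would in fact need care, because the centers $H(rz_0^{(r)})$ need not stay bounded as $r\to1$. Simply apply your argument to $H_r$ for a fixed $r\ge 2/9$, say $r=1/2$. This gives a disk of radius $|H'(0)|/32>|H'(0)|/72$ inside $H_r(\mathbb D)\subset H(\mathbb D)$.
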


\begin{lemma}
\label{finite-valency}
Let $h$ be holomorphic and nonvanishing on $B(t,r)$.
Suppose that, for every $w\in\mathbb C$, the equation $h(z)=w$
has at most $p$ distinct solutions in this disk. Then
\[
r\left|\frac{h'(t)}{h(t)}\right|\le72 \pi p.
\]
\end{lemma}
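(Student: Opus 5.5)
The plan is to rescale to the unit disk and combine Bloch's theorem (Lemma \ref{bloch}) with an area-counting argument. Set $H(\zeta):=h(t+r\zeta)$ for $\zeta\in\mathbb D$. Then $H$ is holomorphic and nonvanishing on $\mathbb D$, and it is at most $p$-valent there. Moreover
\[
r\left|\frac{h'(t)}{h(t)}\right|=\left|\frac{H'(0)}{H(0)}\right|,
\]
so it suffices to show $|H'(0)|\le 72\pi p\,|H(0)|$. If $H'(0)=0$ there is nothing to prove, so assume $H'(0)\neq0$.

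Since $H$ never vanishes on the simply connected domain $\mathbb D$, it has a holomorphic logarithm, which we use to pass to the log-derivative. Choose a branch $\Lambda:=\log H$ on $\mathbb D$, normalized so that $\operatorname{Im}\Lambda(0)\in(-\pi,\pi]$. Then $\Lambda'(0)=H'(0)/H(0)\ne0$, and Lemma \ref{bloch} gives a disk $D\subset\Lambda(\mathbb D)$ of radius $\rho:=|H'(0)/H(0)|/72$. The goal is now to bound $\rho\le\pi p$.

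The key step is to control the valency of $\Lambda$ in a way that yields this bound. Note that $\Lambda$ is injective wherever $H$ is, up to translations by $2\pi\mathrm{i}\mathbb Z$. Precisely, if $w,w+2\pi\mathrm{i}n_1,\dots,w+2\pi\mathrm{i}n_q$ all lie in $\Lambda(\mathbb D)$ with distinct $n_j$, then their preimages are distinct points of $\mathbb D$, and all of them are solutions of $H(z)=e^{w}$. Hence $q+1\le p$: each vertical line $\{\operatorname{Re}=c\}$ meets $\Lambda(\mathbb D)$ in a set containing at most $p$ points from any coset $w+2\pi\mathrm{i}\mathbb Z$. Now suppose $\rho>\pi p$. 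The disk $D$ contains a vertical segment of length $2\rho>2\pi p$ through its center. That segment contains at least $p+1$ points of the form $w+2\pi\mathrm{i}n$, for instance $w$ at one end and then steps of $2\pi$. These give $p+1$ distinct solutions of $H(z)=e^{w}$ in $\mathbb D$, contradicting $p$-valency. Therefore $\rho\le \pi p$, that is,
\[
\left|\frac{H'(0)}{H(0)}\right|\le 72\pi p,
\]
which is the claim.

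The main obstacle is the bookkeeping in the last step. One has to check that the distinct values $w+2\pi\mathrm{i}n\in\Lambda(\mathbb D)$ really have distinct preimages, which is clear because $\Lambda$ is a function and the values differ. One also needs the count to hold with the correct constant: a segment of length strictly greater than $2\pi p$ contains $p+1$ points spaced by $2\pi$. Both points are routine, and the stated constant $72\pi p$ comes out exactly from Bloch's constant $1/72$.
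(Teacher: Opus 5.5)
Your proof is correct and is essentially the paper's argument: take a holomorphic logarithm of the rescaled function, apply Bloch's theorem to get a disk of radius $r|h'(t)/h(t)|/72$ in its image, and, if that radius exceeds $\pi p$, pick $p+1$ points of the disk differing by multiples of $2\pi\mathrm{i}$, whose preimages are $p+1$ distinct solutions of $h(z)=e^{w}$. The paper centers these points as $c+2\pi\mathrm{i}(j-p/2)$, $j=0,\dots,p$, which is the clean way to make your ``$w$ at one end'' precise, since the endpoint need not lie in the open disk; your constant bookkeeping is also more consistent than the paper's text, which writes the threshold as $24\pi p$ where Bloch's constant $1/72$ requires $72\pi p$.
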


\begin{proof}
Since $B(t,r)$ is simply connected and $h$ never vanishes,
we may choose a holomorphic logarithm of $h$.
For $\zeta\in\mathbb D$, set
\[
H(\zeta)=\log h(t+r\zeta).
\]
Then
\[
|H'(0)|=r\left|\frac{h'(t)}{h(t)}\right|.
\]
If this quantity were greater than $24\pi p$,
Lemma \ref{bloch} would imply that $H(\mathbb D)$ contains
a disk of radius greater than $\pi p$.
If $c$ is its center, this disk contains the $p+1$ points
\[
c+2\pi i(j-p/2),\;\;
 j=0,\dots,p.
\]
These points have the same exponential.
Choosing a preimage under $H$ for each point therefore gives
$p+1$ distinct points at which $h$ takes the same value.
This contradicts the assumption.

\end{proof}

We next apply this estimate to an algebraic function.

\begin{lemma}
\label{algebraic-log-derivative}
Let $h$ be a nonconstant real analytic function on a bounded
open interval $I$. Suppose that
\[
Q(t,h(t))=0
\]
where $Q\in\mathbb C[z,w]$ is irreducible over $\mathbb C$
and has total degree at most $D$.
Then $I$ can be divided, up to finitely many points, into
$O(D^2)$ open intervals $J$ on which $h$ never vanishes and,
for every $t\in J$,
\[
d_J(t)\left|\frac{h'(t)}{h(t)}\right|\le72\pi D.
\]
\end{lemma}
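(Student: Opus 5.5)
The plan is to apply Lemma \ref{finite-valency} to a holomorphic continuation of $h$ on complex disks $B(t,d_J(t))$. The intervals $J$ will be chosen so that these disks avoid the bad points of the algebraic curve $Q=0$.

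\textbf{Step 1: the exceptional set.} Write $Q(z,w)=\sum_{j\le D} q_j(z)w^j$. Since $Q$ is irreducible and $h$ is nonconstant, $Q$ genuinely depends on $w$. The exceptional set $\mathcal B\subset\mathbb C$ will consist of
\begin{itemize}
\item the zeros of the leading coefficient $q_{d}(z)$, where $d=\deg_w Q\ge1$;
\item the zeros of the discriminant of $Q$ in $w$ (nonzero, since $Q$ is irreducible, hence squarefree in $w$);
\item the zeros of $Q(z,0)$ (not identically zero unless $Q=cw$, which would force $h\equiv0$, a constant).
\end{itemize}
The discriminant has degree $O(D^2)$ in $z$, so $\#\mathcal B=O(D^2)$. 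Partition $I$ at the real parts of the points of $\mathcal B$ (together with the real points), and then bisect each resulting interval at its midpoint. This gives $O(D^2)$ open intervals $J$. Every point of $\mathcal B$ then lies at distance at least $d_J(t)$ from $t\in J$ whenever $J$ is an interval of the first partition, since the strip over $J$ contains no real part of any point of $\mathcal B$. I will use the full pre-bisection interval $J'$, for which the disk $B(t,\operatorname{dist}(t,\partial J'))\supseteq B(t,d_J(t))$ lies in the vertical strip over $J'$ and so avoids $\mathcal B$.

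\textbf{Step 2: continuation.} On the simply connected disk $B=B(t,r)$, with $r=d_J(t)$, no branch points of $Q=0$ occur, since $B\cap\mathcal B=\emptyset$. By the implicit function theorem and monodromy, the germ of $h$ at $t$ extends to a holomorphic function on $B$ satisfying $Q(z,h(z))=0$; no poles arise because $q_d\neq0$ on $B$. This extension agrees with the real-analytic $h$ on $B\cap\R$ by uniqueness. It is nonvanishing on $B$ because $Q(z,0)\ne0$ there. Since $J\cap\mathcal B=\emptyset$, $h$ also never vanishes on $J$.

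\textbf{Step 3: valency bound.} For fixed $w\in\mathbb C$, the solutions of $h(z)=w$ in $B$ are zeros of $z\mapsto Q(z,w)$. This polynomial in $z$ is not identically zero: otherwise $w-c$ would divide $Q$, and irreducibility would give $Q=c'(w-c)$, forcing $h$ to be constant. It has degree at most $D$, so there are at most $p=D$ solutions. Lemma \ref{finite-valency} then yields $r|h'(t)/h(t)|\le72\pi D$, as claimed.

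The main obstacle is Step 2: single-valuedness of the continuation on the disk, and ensuring the disk genuinely avoids every branch point, pole and zero rather than merely the real ones. Using vertical strips over the partition intervals, i.e.\ cutting $I$ at the real parts of all points of $\mathcal B$, handles both cleanly. The count $O(D^2)$ comes from the degree of the discriminant.
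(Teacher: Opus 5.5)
Your proof is correct and follows the paper's argument essentially step for step. It uses the same exceptional set (zeros of the leading coefficient, of $Q(z,0)$, and of the discriminant), cuts $I$ at the real parts of these points so that $B(t,d_J(t))$ avoids them, continues the branch by monodromy, gets valency $p=D$ from $Q(z,w_0)\not\equiv 0$, and applies Lemma \ref{finite-valency}. Two small differences: your midpoint bisection is unnecessary but harmless (since $d_J\le d_{J'}$ for $J\subset J'$), and taking the open disk of radius exactly $d_J(t)$ replaces the paper's limit $r\uparrow d_J(t)$.
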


\begin{proof}
Write
\[
Q(z,w)=\sum_{j=0}^n a_j(z)w^j.
\]
Since $h$ is nonconstant and $Q$ is irreducible,
$n\ge1$, $a_n\not\equiv0$, and $a_0\not\equiv0$.
Also, the polynomial
\[
\Delta(z):=\operatorname{Res}_w(Q,\partial_wQ)
\]
is not identically zero.

Let $E$ be the set of complex zeros of $a_na_0\Delta$.
The degree bound for the resultant gives
\[
\#E\le2D+(2n-1)D\lesssim D^2.
\]
Divide $I$ at the real parts of the points of $E$ that lie in $I$.
This produces $O(D^2)$ intervals.

Fix one such interval $J$, a point $t\in J$, and
$0<r<d_J(t)$. For every $e\in E$,
\[
|t-e|\ge|t-\operatorname{Re}e|\ge d_J(t)>r.
\]
Thus $a_n$, $a_0$, and $\Delta$ do not vanish on $B(t,r)$.

Since $a_n$ and $\Delta$ are nonzero, the polynomial
$Q(z,\cdot)$ has exactly $n$ distinct roots at every point
of the disk. These roots define local holomorphic functions. Since \(h\) is continuous and the roots are distinct, \(h\) agrees on a real neighborhood of \(t\) with one of these local holomorphic root functions.
They extend to single-valued holomorphic branches on the
whole disk by the monodromy theorem, since the disk is
simply connected.
This branch never vanishes, since $a_0$ is nonzero there.

For each fixed $w_0\in\mathbb C$, the polynomial $Q(z,w_0)$
is not identically zero. Otherwise $w-w_0$ would divide
$Q(z,w)$, and irreducibility would force $h$ to be constant.
Every solution of $h(z)=w_0$ is therefore a zero of a nonzero
polynomial in $z$ of degree at most $D$.
Hence each value has at most $D$ distinct preimages.

Lemma \ref{finite-valency} now gives
\[
r\left|\frac{h'(t)}{h(t)}\right|\le72\pi D.
\]
Letting $r$ increase to $d_J(t)$ proves the claim.
\end{proof}

We now apply this result to $h=g''$.
For this, we need a polynomial equation for $g''$
whose degree is controlled by that of $g$.

\begin{lemma}
\label{algebraic-second-derivative}
Let $g$ be real analytic on an interval $I$, and suppose that
\[
P(t,g(t))=0,
\]
where $P\in\mathbb C[z,w]$ is irreducible and has total
degree at most $d$.
If $g''$ is nonconstant, then there is an irreducible polynomial
$Q\in\mathbb C[z,v]$ such that
\[
Q(t,g''(t))=0
\]
and
\[
\deg Q\le6d^2.
\]
\end{lemma}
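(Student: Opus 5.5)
The plan is to regard $g$ as an element of the function field of the irreducible curve $\{P=0\}$ and to compute $g'$ and $g''$ as rational functions of $(t,g)$ by implicit differentiation. Write $P_z,P_w$ for the partial derivatives. Since $P$ is irreducible and $g$ is a genuine branch, $P_w(t,g(t))\not\equiv 0$; otherwise $P$ would divide $P_w$, which has smaller $w$-degree, and this forces $P_w\equiv0$. In that case $P$ depends only on $z$ and $g$ could not be a function of $t$ on an interval. Differentiating $P(t,g)=0$ gives
\[
g'=-\frac{P_z}{P_w},
\]
and differentiating once more gives
\[
g''=-\frac{P_{zz}P_w^2-2P_{zw}P_zP_w+P_{ww}P_z^2}{P_w^3},
\]
all evaluated at $(t,g(t))$. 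The numerator $N(z,w)$ has degree at most $3d-4$ and the denominator $M=P_w^3$ has degree at most $3(d-1)$. So $g''(t)=R(t,g(t))$ with $R=N/M$, and $M(t,g(t))\not\equiv0$.

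Next, eliminate $w$. Consider the polynomial $S(z,v):=\operatorname{Res}_w\bigl(P(z,w),\,vM(z,w)+N(z,w)\bigr)$. At every $t$ where $M(t,g(t))\neq0$, the point $w=g(t)$ is a common root of both polynomials when $v=g''(t)$, so $S(t,g''(t))=0$. By continuity and real analyticity this holds on all of $I$.

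To bound the degree, use the Sylvester matrix. If $P$ has $w$-degree $n\le d$, the second polynomial has $w$-degree at most $3d$, and all coefficients have controlled total degree in $(z,v)$. The standard weighted degree bound for resultants gives total degree $\le \deg P\cdot\deg(vM+N)\le d(3d-2)\le 3d^2$, which sits comfortably within $6d^2$.

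The step I expect to be the main obstacle is ensuring that $S\not\equiv0$. This can fail if $P$ and $vM+N$ share a factor in $w$ over $\mathbb C(z,v)$. Since $P$ is irreducible and independent of $v$, a common factor would have to be $P$ itself, which would require $P\mid M$ and $P\mid N$. But $P\mid M=P_w^3$ would mean $P\mid P_w$, and we have already ruled that out. So $S\not\equiv0$. (Here $n\ge1$, since $g$ is a function of $t$, so $P$ genuinely involves $w$.)

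Finally, take $Q$ to be an irreducible factor of $S$ that vanishes along $(t,g''(t))$. Such a factor exists because the factors are finitely many and each zero set meets the analytic curve either on a discrete set or identically. Since $\deg Q\le\deg S\le 3d^2\le 6d^2$, this $Q$ satisfies the statement.

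The hypothesis that $g''$ is nonconstant is only used downstream in Lemma \ref{algebraic-log-derivative}; here it merely guarantees that $Q$ genuinely involves $v$. It cannot be of the form $v-c$, and it cannot lie in $\mathbb C[z]$ alone, because it vanishes on a graph over an interval.
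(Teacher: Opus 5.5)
Your proposal is correct and follows the paper's route: implicit differentiation to write $g''=-N/P_w^3$, elimination of $w$ through $\operatorname{Res}_w(P,\,vP_w^3+N)$, and selection of an irreducible factor vanishing on the germ of $g''$. The differences are minor: you show the resultant is nonzero by a divisibility argument ($P\mid P_w^3$ would force $P\mid P_w$) rather than the paper's product over the distinct roots $\alpha_j$, and you use the B\'ezout-type bound for resultants to get the sharper $\deg Q\le d(3d-2)$ in place of the paper's cruder $6d^2$.
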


\begin{proof}
At points where $P_w(t,g(t))\ne0$, implicit differentiation gives
\[
g''(t)
=-\frac{N(t,g(t))}{P_w(t,g(t))^3}
\]
where
\[
N=P_{zz}P_w^2-2P_{zw}P_zP_w+P_{ww}P_z^2.
\]
Consequently, $h=g''$ satisfies
\[
R(t,h(t))=0
\]
where
\[
R(z,v)=\operatorname{Res}_w
\bigl(P(z,w),vP_w(z,w)^3+N(z,w)\bigr).
\]

This resultant is not identically zero.
Indeed, over an algebraic closure of $\mathbb C(z)$,
the roots $\alpha_j$ of $P(z,\cdot)$ are distinct.
Up to a nonzero factor, the resultant is
\[
\prod_j
\bigl(vP_w(z,\alpha_j)^3+N(z,\alpha_j)\bigr).
\]
Each factor has a nonzero coefficient of $v$, so the product
is a nonzero polynomial in $v$.

The relation $R(t,h(t))=0$ holds away from the finitely many
zeros of $\operatorname{Res}_w(P,P_w)$, and extends to all
of $I$ by continuity.
The standard degree bound for the resultant gives
\[
\deg R
\le (3d-3)d+d(3d-2)
\le6d^2.
\]
Choose an irreducible factor $Q$ of $R$ which vanishes on
the analytic germ of $h$ at some point.
The identity theorem then gives $Q(t,h(t))=0$ throughout $I$.
\end{proof}

\begin{theorem}
\label{curvature-partition}
Let $g\in C^3(I)$ be real-valued on a bounded open interval $I$.
Suppose that $g''$ never vanishes and that
\[
P(t,g(t))=0
\]
for a nonzero polynomial $P$ of total degree at most $d$.
Then $I$ can be divided, up to finitely many points, into
$\lesssim_d1$ intervals $J$ on which
\[
d_J(t)\left|\frac{g'''(t)}{g''(t)}\right|\le C_d.
\]
\end{theorem}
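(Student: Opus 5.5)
The plan is to apply Lemma \ref{algebraic-log-derivative} with $h=g''$. The input polynomial equation comes from Lemma \ref{algebraic-second-derivative}.

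First we reduce to the irreducible, analytic setting. Since $P(t,g(t))=0$ with $P\neq0$ of degree at most $d$, factor $P$ over $\mathbb C$ into at most $d$ irreducible factors, each of degree at most $d$. We use Theorem \ref{algebraic}, or directly the fact that $g$ is semialgebraic of complexity $\lesssim_d 1$ via its graph being contained in $\{P=0\}$, to split $I$ into $\lesssim_d 1$ open subintervals, up to finitely many points, on which $g$ is real analytic. On each such subinterval, analyticity and the identity theorem show that a single irreducible factor $P_1$ satisfies $P_1(t,g(t))=0$ on the whole subinterval. Here is the reason: the zero sets of the finitely many factors along the graph are closed, they cover the subinterval, and one of them must have an accumulation point, hence vanish identically. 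The number of subintervals must be bounded purely in terms of $d$. If the citation to Theorem \ref{algebraic} does not give this cleanly, I would argue directly. Analyticity of $g$ can fail only where $\partial_wP_1(t,g(t))=0$ or the leading coefficient vanishes, and these are zeros of $\operatorname{Res}_w(P_1,\partial_wP_1)$ and of $a_n$, so there are $O(d^2)$ points. Away from them, $g$ coincides locally with an analytic root branch, and continuity plus $C^3$ keep it on one branch on each complementary interval. By Lemma \ref{partition}-style counting, the total number of pieces stays $\lesssim_d 1$.

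On each such piece $I'$ where $g$ is analytic and $P_1(t,g(t))=0$, we split into two cases. If $g''$ is constant, then $g'''\equiv0$ and the quantity $d_J|g'''/g''|$ vanishes, so nothing needs to be done. Note that $g''\neq0$ is given, so division is legitimate. Otherwise, Lemma \ref{algebraic-second-derivative} produces an irreducible $Q\in\mathbb C[z,v]$ with $\deg Q\le 6d^2$ and $Q(t,g''(t))=0$ on $I'$. Since $g''$ is nonconstant, real analytic and bounded-interval defined, Lemma \ref{algebraic-log-derivative} with $h=g''$ and $D=6d^2$ divides $I'$ into $O(d^4)$ intervals $J$ on which
\[
d_J(t)\left|\frac{g'''(t)}{g''(t)}\right|\le 72\pi\cdot 6d^2.
\]
Collecting all pieces gives $\lesssim_d 1$ intervals with $C_d=432\pi d^2$.

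The one subtlety is that $d_J$ must be measured relative to the final subinterval $J$, not $I'$. This is automatic, since Lemma \ref{algebraic-log-derivative} already outputs the final intervals $J\subset I'$. The main obstacle I expect is the first step: verifying uniformly in $d$ that $g$ is analytic on boundedly many pieces and satisfies a single irreducible equation there. This is handled by the resultant-zero count above, which should bound the number of exceptional points by $O(d^2)$.
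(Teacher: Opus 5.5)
Your proposal is correct and follows the paper's proof essentially step for step: reduce to an irreducible $P$ with $g$ analytic on $\lesssim_d 1$ pieces, dispose of the case of constant $g''$, then feed Lemma \ref{algebraic-second-derivative} into Lemma \ref{algebraic-log-derivative} with $D=6d^2$; you simply make the constants explicit. One small point: your fallback argument for the reduction uses $P_1$ before it has been chosen, so the discriminant there should be that of the squarefree part of $P$ (the product of its distinct irreducible factors involving $w$), after which the identity theorem selects $P_1$ on each piece.
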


\begin{proof}
After a further partition of \(I\) into \(\lesssim_d 1\) intervals, we may assume that \(P\) is irreducible, with degree still at most \(d\), and \(g\) is analytic.

If $g''$ is constant, then $g'''=0$, and the conclusion is immediate.
Otherwise, Lemma \ref{algebraic-second-derivative} gives an
irreducible polynomial $Q$ such that
\[
Q(t,g''(t))=0
\]
with $\deg Q\lesssim_d 1$.
Applying Lemma \ref{algebraic-log-derivative} to $h=g''$,
we obtain a partition into $\lesssim_d 1$ intervals $J$ on which
\[
d_J(t)\left|\frac{g'''(t)}{g''(t)}\right|\le C_d.
\]
This proves the claim.

\end{proof}

Then we obtain the whole proof by Theorem \ref{curvature-partition}.

\printbibliography

\end{document}